\newtheorem*{rep@theorem}{\rep@title}
\newcommand{\newreptheorem}[2]{%
\newenvironment{rep#1}[1]{%
 \def\rep@title{#2 \ref{##1}}%
 \begin{rep@theorem}}%
 {\end{rep@theorem}}}
\theoremstyle{theorem}
\newtheorem{theorem}{Theorem}
\newtheorem{proposition}[theorem]{Proposition}
\newtheorem{corollary}[theorem]{Corollary}
\newtheorem{thm}{Theorem}[section]
\newtheorem{lemma}[thm]{Lemma}
\newtheorem{prop}[thm]{Proposition}
\newtheorem{claim}[thm]{Claim}
\newtheorem*{thm*}{Theorem}
\newtheorem*{lemma*}{Lemma}
\newtheorem*{prop*}{Proposition}
\newtheorem*{corr*}{Corrolary}
\newtheorem*{claim*}{Claim}
\theoremstyle{remark}
\newtheorem{quest}[thm]{Question}
\newtheorem*{rmk*}{Remark}
\newtheorem*{conj*}{Conjecture}
\newtheorem*{quest*}{Question}
\theoremstyle{definition}
\newtheorem{defn}[thm]{Definition}
\newtheorem{exmp}[thm]{Example}
\newtheorem*{defn*}{Definition}
\newtheorem*{exmp*}{Example}
\newenvironment{equ*}[1]{\begin{IEEEeqnarray*}{#1}}{\end{IEEEeqnarray*}}
\newcommand{\R}{\mathbb{R}}
\newcommand{\Q}{\mathbb{Q}}
\newcommand{\Z}{\mathbb{Z}}
\newcommand{\N}{\mathbb{N}}
\newcommand{\Scl}{\mathcal{S}}
\newcommand{\att}{\mathtt{a}}
\newcommand{\btt}{\mathtt{b}}
\newcommand{\ctt}{\mathtt{c}}
\newcommand{\ttt}{\mathtt{t}}
\newcommand{\stt}{\mathtt{s}}
\newcommand{\xtt}{\mathtt{x}}
\newcommand{\ytt}{\mathtt{y}}
\newcommand{\Rcl}{\mathcal{R}}
\newcommand{\Pcl}{\mathcal{P}}
\newcommand{\Bcl}{\mathcal{B}}
\newcommand{\smp}{\mathrm{sim}}
\newcommand{\col}{\colon}
\newcommand{\scl}{\mathrm{scl}}
\newcommand{\cl}{\mathrm{cl}}
\newcommand{\SCL}{\mathrm{SCL}}
\newcommand{\rp}{\mathrm{rp}}
\newcommand{\Gcl}{\mathcal{G}}
\newcommand{\pt}{\mathrm{pt}}
\newcommand{\RC}{\mathrm{RC}}
\newcommand{\Acl}{\mathcal{A}}
\author{Nicolaus Heuer}
\title{The Full Spectrum of SCL on Recursively Presented Groups}
\address{Department of Mathematics\\
  University of Oxford}
\email[N.~Heuer]{heuer@maths.ox.ac.uk}
\begin{document}

\maketitle

\begin{abstract}
We show that the set $\SCL^{\rp} \subset \R^{\geq 0}$ of stable commutator lengths on recursively presented groups equals the set of 
non-negative right-computable numbers. 
Hence all non-negative algebraic and computable numbers are in $\SCL^{\rp}$ and $\SCL^{\rp}$ is not closed under subtraction. 
We also show that every non-negative real number is the stable commutator length of an element in some infinitely presented small cancellation group.
\end{abstract}

\section{Introduction}
Let $G$ be group and let $G' = [G,G]$ be its commutator subgroup. For an element $g \in G'$ the \emph{commutator length} ($\cl_G(g)$) is the least number of commutators needed to express $g$ as their product. Define the \emph{stable commutator length} 
($\scl_G(g)$) via $\scl_G(g) = \lim_{n \to \infty} \cl_G(g^n)/n$.

Stable commutator length $(\scl)$ is well studied in many classes of groups and has geometric meaning. Let $X$ be a pointed connected topological space with fundamental group $G$ and $\gamma \col S^1 \to X$ be a pointed loop, which represents an element $g \in G$. Then both $\scl_G(g)$ and $\cl_G(g)$ measure the least complexity of a surface needed to bound $\gamma$. The theory of these invariants is developed mostly by Calegari and his coauthors; see  \cite{Calegari}.  

In many classes of finitely presented groups $\scl$ has been extensively studied due to its relationship with rotation numbers, surfaces embeddings and simplicial volume; see Section \ref{subsec:examples of scl}. 
It is known to be rational on certain graphs of groups, in particular on free groups and on Baumslag-Solitair groups \cite{Calegari_rational, Chen-scl}. Every rational number is realised as the stable commutator length of an element in some finitely presented group \cite[Remark 5.20]{Calegari}.
However, there are several classes of finitely presented groups known where $\scl$ is not rational \cite{zhuang}  \cite[Chapter 5]{Calegari}. In all of those cases, $\scl$ is even not algebraic.

Calegari asked \cite[Questions 5.47 and 5.48]{Calegari} which real non-negative numbers arise as the stable commutator lengths of finitely presented groups and whether there are elements in finitely presented groups which have algebraic but non-rational stable commutator length.

We answer those questions for the larger class of recursively presented groups. 
Higman's Embedding Theorem (Theorem \ref{thm:higman embedding}) asserts that these are exactly the finitely generated subgroups of finitely presented groups. 
The set of isomorphism classes of recursively presented groups is countable and thus so is the set $\SCL^{\rp} \subset \R^{\geq 0}$ of stable commutator lengths of recursively presented groups. 

We will fully describe $\SCL^{\rp}$ in terms of its computability. We say that real number $\alpha \in \R$ is \emph{right-computable} if the set $\{ q \in \Q \mid \alpha < q \}$ is recursively enumerable; see Definition \ref{defn:computable and right computable numbers}. The set~$\RC^{\geq 0} \subset \R$ of non-negative right-computable numbers is countable, as there are only countably many Turing machines.

\begin{theorem} \label{theorem:classification of scl on rp groups}
We have that
$$
\SCL^{\rp}  = \RC^{\geq 0}
$$
as subsets of $\R^{\geq 0}$ where 
\begin{eqnarray*}
\SCL^{\rp} &:=& \{ \scl_G(g) \mid g \in G \mbox{, G recursively presented} \} \mbox{, and} \\
\RC^{\geq 0} &:=& \{ \alpha \geq 0 \mid \alpha \mbox{ right computable} \}.
\end{eqnarray*}
Every non-negative computable or algebraic number is in $\SCL^{\rp}$. 
The set $\SCL^{\rp}$ is closed under addition but not under subtraction, i.e.\ there are elements $\alpha, \beta \in \SCL^{\rp}$ such that $\alpha-\beta > 0$ but $\alpha-\beta \not \in \SCL^{\rp}$.
\end{theorem}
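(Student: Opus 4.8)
The plan is to prove the two inclusions $\SCL^{\rp}\subseteq\RC^{\geq 0}$ and $\RC^{\geq 0}\subseteq\SCL^{\rp}$ separately, and then to read off the remaining assertions from this identity together with elementary computability facts.

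\emph{The inclusion $\SCL^{\rp}\subseteq\RC^{\geq 0}$.} Let $G=\langle S\mid R\rangle$ be recursively presented and $g\in G$ with $\scl_G(g)<\infty$. Subadditivity $\cl_G(g^{m+n})\leq\cl_G(g^m)+\cl_G(g^n)$ and Fekete's lemma give $\scl_G(g)=\inf_{n\geq 1}\cl_G(g^n)/n$. The key point is that the predicate ``$\cl_G(g^n)\leq k$'' is recursively enumerable uniformly in $(n,k)$: it asserts the existence of words $a_1,b_1,\dots,a_k,b_k$ over $S$ with $g^n\prod_{i=1}^k[a_i,b_i]^{-1}=_G 1$, and the word problem of a recursively presented group is semi-decidable (the normal closure of $R$ is recursively enumerable, so dovetail over all such tuples). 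Hence the strict upper rational cut of $\scl_G(g)$ --- which is the union, over the recursively enumerable set of pairs $(n,k)$ with $\cl_G(g^n)\leq k$, of the decidable sets $\{q\in\Q\mid k/n<q\}$ --- is recursively enumerable, so $\scl_G(g)\in\RC^{\geq 0}$. Elements with $\scl_G(g)=\infty$ contribute nothing to $\SCL^{\rp}\subseteq\R^{\geq 0}$.

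\emph{The inclusion $\RC^{\geq 0}\subseteq\SCL^{\rp}$} is the heart of the matter. Given $\alpha\in\RC^{\geq 0}$, a recursive enumeration of its upper cut yields a computable non-increasing sequence of rationals $q_n>0$ with $\inf_n q_n=\alpha$; write $q_n=k_n/p_n$ with $(k_n,p_n)$ computable. I would then build --- along the lines of the infinitely presented small-cancellation construction announced in the abstract --- a group $G$ with a distinguished element $g$ as follows: for each $n$ introduce a fresh finite family of auxiliary generators and a defining relator expressing $g^{p_n}$ as a product of $k_n$ commutators in those generators, the whole family of relators chosen to satisfy a small-cancellation condition. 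This construction must then be shown to have two properties. The \emph{upper bound}: each such relator gives $\cl_G(g^{p_n})\leq k_n$, hence $\scl_G(g)\leq k_n/p_n=q_n$, so $\scl_G(g)\leq\inf_n q_n=\alpha$. The \emph{lower bound} $\scl_G(g)\geq\alpha$: for every $n$ one produces, directly on $G$, homogeneous quasimorphisms giving, via Bavard duality, lower bounds tending to $q_n$ --- here the small-cancellation geometry ensures that distinct gadgets do not interact to yield cheaper commutator representations of powers of $g$. Since $(q_n)$ is computable, the assembled relator set is recursively enumerable, so $G$ is recursively presented, giving $\alpha=\scl_G(g)\in\SCL^{\rp}$. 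I expect the lower bound to be the main obstacle: ruling out that amalgamating infinitely many gadgets makes $\scl_G(g)$ drop strictly below $\alpha$ is what forces a careful, small-cancellation-compatible choice of the gadgets and of $g$.

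\emph{The remaining assertions} are now formal. Every non-negative algebraic number is computable (isolate its root from the minimal polynomial, e.g.\ via Sturm sequences) and every computable real is right-computable, so both classes lie in $\RC^{\geq 0}=\SCL^{\rp}$. The set $\RC^{\geq 0}$ is closed under addition, since $\alpha+\beta<q$ holds iff $\alpha<q_1$ and $\beta<q_2$ for some rationals with $q_1+q_2\leq q$, a recursively enumerable condition; hence $\SCL^{\rp}$ is closed under addition. For the failure of closure under subtraction, fix a real $\gamma$ with $0<\gamma<1$ whose lower cut $\{q\in\Q\mid q<\gamma\}$ is recursively enumerable but whose upper cut $\{q\in\Q\mid\gamma<q\}$ is not --- for instance $\gamma=\sum_{n\in H}2^{-n-1}$ with $H$ the halting set, whose right-computability would decide $H$. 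Then $1-\gamma$ is right-computable, so both $2$ and $1-\gamma$ lie in $\RC^{\geq 0}=\SCL^{\rp}$, while $2-(1-\gamma)=1+\gamma>0$ has an upper cut that is a computable shift of that of $\gamma$ and hence is not recursively enumerable, so $1+\gamma\notin\RC^{\geq 0}=\SCL^{\rp}$.
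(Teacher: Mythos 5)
Your treatment of the inclusion $\SCL^{\rp}\subseteq\RC^{\geq 0}$ matches the paper's argument up to minor packaging (the paper dovetails Turing machines $T_N$ checking $\cl_G(g^N)\leq qN$; you phrase the same thing via an r.e.\ predicate and Fekete's lemma), and your handling of the remaining assertions (algebraic and computable numbers, closure under addition, failure of closure under subtraction using a left-computable but non-computable real) is essentially the content of Proposition~\ref{prop:properties of rc}. The genuine gap is in the converse inclusion $\RC^{\geq 0}\subseteq\SCL^{\rp}$. You correctly identify this as the heart of the matter and the lower bound $\scl_G(g)\geq\alpha$ as ``the main obstacle,'' but you leave it as a sketch: the assertion that ``small-cancellation geometry ensures that distinct gadgets do not interact to yield cheaper commutator representations'' is exactly what needs to be proved, and the proposed route via homogeneous quasimorphisms and Bavard duality is not carried out. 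This is precisely the content of Proposition~\ref{proposition:approx scl groups}, whose proof occupies Sections~\ref{sec:van kampen on surfaces} and~\ref{sec:proof of Prop} and is the bulk of the paper.

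The paper's actual lower-bound argument is combinatorial rather than quasimorphism-based: an almost-extremal admissible surface $f\colon\Sigma\to X$ is converted into a van Kampen diagram on $\Sigma$ (Proposition~\ref{prop:admissible maps and van Kampen diagrams}), and a combinatorial Gauss--Bonnet estimate (Propositions~\ref{prop:combinatorial gauss bonnet} and~\ref{prop:curvature via branch vertices}) bounds $-\chi(\Sigma)$ from below via branch vertices along the $w_N^{2m}$-portion of each positive disc, while a weighting function $\mu$ (Definition~\ref{defn:mu}) accounts for degree cancellations against adjacent negative discs. The monotonicity hypothesis $m_i/n_i\geq m_{i+1}/n_{i+1}$ enters essentially at this point, in Claim~\ref{claim:estimation of nu}: it guarantees the exchange between degree lost and curvature saved is unfavourable whenever a positive disc borders a negative one. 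Your sketch does not supply an analogue of this estimate, nor does it explain why the gadgets should be chosen so that this monotonicity is available. A smaller point: the paper's relators are not literally ``$g^{p_n}$ is a product of $k_n$ commutators'' but $\ttt^{n_i}w_i^{2m_i}s_{i,m_i,n_i}$ with $w_i$ a single commutator and $s_{i,m_i,n_i}$ a product of three, giving $\scl_\Gcl(\ttt^{n_i})\leq m_i+3$; the additive constant washes out only because $n_i\to\infty$, and the small-cancellation condition $C'(1/6)$ is arranged by the choice of $l=6\cdot 5^N7^m11^n$ in $s_{N,m,n}$, details your sketch would also need to fix.
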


It is not hard to show that $\SCL^{\rp} \subset \RC^{\geq 0}$. To show the converse we will construct for an element $\alpha \in \RC^{\geq 0}$ an explicit recursive presentation of a group $\Gcl$ with an element $\ttt \in \Gcl$ such that $\scl_\Gcl(\ttt) = \alpha$. For this we will need the following proposition, which is of its own interest.

\begin{proposition} \label{proposition:approx scl groups}
Let $(m_i)_{i \in \N}, (n_i)_{i \in \N}$ be two sequences of natural numbers such that $n_i < n_{i+1}$ and $\frac{m_i}{n_i} \geq \frac{m_{i+1}}{n_{i+1}}$ for every $i \in \N$. 
Then there is an infinitely presented $C'(1/6)$ small cancellation group $\Gcl = \Gcl((m_i)_{i \in \N}, (n_i)_{i \in \N})$ whose presentation is given explicitly in terms of $(m_i)_{i \in \N}$ and  $(n_i)_{i \in \N}$ and an element $\ttt \in \Gcl$ such that
$$
\scl_{\Gcl}(\ttt) = \lim_{i \to \infty} \frac{m_i}{n_i}.
$$
\end{proposition}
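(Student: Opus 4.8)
The plan is to realize the target value $\alpha := \lim_i m_i/n_i$ as $\scl_\Gcl(\ttt)$ for a group $\Gcl$ given by an explicit infinite $C'(1/6)$ presentation $\langle\, \ttt, \dots \mid R_1, R_2, \dots\,\rangle$, with the relator $R_i$ built from $(m_i,n_i)$ so that $\ttt^{n_i}$ both visibly bounds an admissible surface of Euler characteristic $-2m_i$ wrapping $\ttt$ a total of $n_i$ times and admits no strictly cheaper filling; the two bounds $\scl_\Gcl(\ttt)\le\alpha$ and $\scl_\Gcl(\ttt)\ge\alpha$ then give the claim. Concretely, for each $i$ one takes a supply of fresh auxiliary generators and a cyclically reduced relator $R_i$ that spells the oriented boundary of a compact connected surface $\Sigma_i$ with $-\chi(\Sigma_i)=2m_i$ all of whose boundary components map to powers of $\ttt$ with degrees summing to $n_i$ — e.g.\ a genus-$m_i$ surface with two $\ttt$-boundary components, or a many-holed sphere, with conjugators and commutators of fresh generators playing the roles of handles and extra boundary components — plus padding by further fresh letters to keep the $\ttt$-exponents occurring in $R_i$ a small fraction of $|R_i|$. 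Using $n_i<n_{i+1}$ and the monotonicity of the slopes (and, if necessary, passing to a subsequence, which changes neither the essential data nor $\alpha$), one arranges that every piece has length less than $\tfrac16$ of any relator containing it, so $\{R_i\}$ is $C'(1/6)$. Since $\ttt$ occurs in each relator only in short runs of a single sign, Greendlinger's Lemma gives that $\ttt$ has infinite order in $\Gcl$; and since $[R_i]=n_i[\ttt]$ in $H_1(\Gcl)$, the element $\ttt^{n_1}$ lies in $\Gcl'$, so $\scl_\Gcl(\ttt)=\tfrac1{n_i}\scl_\Gcl(\ttt^{n_i})$ is well defined.

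\emph{Upper bound.} Since $R_i$ is (a cyclic conjugate of) the boundary word of $\Sigma_i$, the surface $\Sigma_i$ maps to the presentation complex $X_\Gcl$ and is an admissible surface for $\ttt$ with $n(\Sigma_i)=n_i$ and $-\chi^-(\Sigma_i)=2m_i$; hence $\scl_\Gcl(\ttt)\le\frac{-\chi^-(\Sigma_i)}{2n(\Sigma_i)}=\frac{m_i}{n_i}$ for every $i$, and taking the infimum — which, by monotonicity of the slopes, equals $\alpha$ — gives $\scl_\Gcl(\ttt)\le\alpha$.

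\emph{Lower bound.} Let $S\to X_\Gcl$ be an arbitrary admissible surface for $\ttt$ with total boundary degree $n(S)$; the goal is $\frac{-\chi^-(S)}{2n(S)}\ge\alpha$. Put $S$ into an efficient (taut) position: the preimage of each relator $2$-cell is a disjoint union of disks, the preimage of each edge-midpoint of $X^1_\Gcl$ is a disjoint union of bands and circles, and no reducing move applies — here the $C'(1/6)$ condition controls the picture. The $n(S)$ letters $\ttt$ on $\partial S$ can be matched, through $\ttt$-bands, only with the $\ttt$-subarcs on the boundaries of the relator disks, so $\sum_i N_i n_i=n(S)$ where $N_i$ is the net number of $R_i$-disks; and, the auxiliary generators of $R_i$ being fresh, the non-$\ttt$ parts of the boundaries of the $R_i$-disks can only be matched among themselves, so a combinatorial Gauss--Bonnet count (all pieces being short) forces $-\chi^-(S)\ge 2\sum_i|N_i|m_i$. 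Therefore $\frac{-\chi^-(S)}{2n(S)}\ge\frac{\sum_i|N_i|m_i}{\sum_i N_i n_i}\ge\frac{\sum_i|N_i|m_i}{\sum_i|N_i|n_i}\ge\min_i\frac{m_i}{n_i}=\alpha$, and as $S$ was arbitrary, $\scl_\Gcl(\ttt)\ge\alpha$. With the upper bound this proves $\scl_\Gcl(\ttt)=\alpha$.

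\emph{Main obstacle.} The difficulty is concentrated in two places, both about the interaction of small cancellation with $\scl$. First, the lower bound requires genuinely carrying out the surface/small-cancellation analysis: converting the local combinatorics of the taut surface — which relator disks appear and how their boundary subarcs are paired by auxiliary-generator bands — into the clean inequality $-\chi^-(S)\ge 2\sum_i|N_i|m_i$, with the $C'(1/6)$ error terms and the auxiliary bands contributing nothing, i.e.\ ruling out that several relator disks cooperate through bands to fill $\ttt^{n_i}$ below the cost $2m_i$ that $\Sigma_i$ exhibits. Second, designing the relators so that ``$C'(1/6)$'', ``$-\chi(\Sigma_i)=2m_i$ exactly'', and ``the certificate $\Sigma_i$ is tight'' all hold at once is delicate, especially when the slope $m_i/n_i$ is small and $\ttt^{n_i}$ is a long word: its $\ttt$-syllables must then be broken into many short runs by padding without disturbing the Euler-characteristic bookkeeping, and the padding must not create new, cheaper fillings of $\ttt$.
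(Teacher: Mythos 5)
Your overall strategy — an explicit infinite $C'(1/6)$ presentation with one relator per pair $(m_i,n_i)$, an explicit surface certificate giving $\scl(\ttt)\le m_i/n_i$, and a small-cancellation/van~Kampen analysis for the matching lower bound — is the right one, and it is also the paper's. However, the lower bound as you have written it has a genuine gap, and moreover the inequality it rests on is not what the paper's argument actually establishes.

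The crux is your claim that a taut admissible surface $S$ satisfies $-\chi^-(S)\ge 2\sum_i|N_i|m_i$ where $N_i$ is the \emph{net} number of $R_i$-disks. You attribute this to ``a combinatorial Gauss--Bonnet count (all pieces being short),'' but that phrase is precisely where all the work lives, and the inequality in this ``net'' form is not what one can get. The difficulty is that positive and negative disks with different indices can be glued along pieces of the auxiliary part, and such gluings \emph{steal curvature} from the positive disk: a positive $R_i$-disk $P$ glued to negative disks along many of its auxiliary segments can have $-\kappa(P)$ substantially smaller than $2m_i$, while the adjacent negative disk only contributes $-\kappa\ge 0$, not $2m_j$. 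Thus the right-hand side $2\sum_i|N_i|m_i$ can overshoot the actual $-\chi^-(S)$. The paper does not prove a bound of this form; instead it introduces a bookkeeping function $\mu(P)$ (counting how many of the auxiliary segments of a positive disk are glued to negative disks) and proves the weaker $-\kappa(P)\ge 2m(P)-\tfrac{1}{6}\mu(P)$, compensating the loss on the Euler-characteristic side by a correspondingly sharper bound on the degree, $n(\Sigma)\le\sum_{P\in\Pcl^+}\bigl(n(P)-\tfrac{\mu(P)}{12}\tfrac{n(P)}{m(P)}\bigr)$. It is the ratio of these two corrected sums that stays $\ge\alpha$, and that computation works only because a negative disk that steals curvature from a positive $r_{m_i,n_i,i}$-disk is forced (by the design of the relators) to carry an index $j\ge i$, so by the monotonicity of $m_i/n_i$ its degree-to-genus ratio is unfavourable. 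In your argument the monotonicity hypothesis only enters trivially at the end ($\min m_i/n_i\ge\alpha$), which is a sign that the essential mechanism is missing.

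A secondary difference: you propose fresh auxiliary generators for each $i$, so cross-index gluing can happen only along $\ttt$-runs. That is a reasonable design (and would indeed simplify the cross-index interactions), but it does not by itself give the asserted Euler characteristic bound — you would still have to prove a per-index statement that the $R_i$-subsurface has $\chi\le -2|N_i|m_i$, which is essentially a one-relator $\scl$ estimate and is where the content lies. The paper instead uses shared letters $\att,\btt,\ctt$ with index-dependent exponents $N$ so that cross-index pieces \emph{do} occur but only in the ``favourable'' direction $j\ge i$, and that asymmetry is what the monotonicity exploits. So your proposal is not a complete proof: the central inequality is unjustified and, in the form stated, does not match (and likely fails against) the actual geometry of the van Kampen diagram.
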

The limit in Proposition \ref{proposition:approx scl groups} exists as the sequence $(\frac{m_i}{n_i})_{i \in \N}$ is decreasing and non-negative. As a corollary to Proposition \ref{proposition:approx scl groups} we obtain an explicit proof of the following result:

\begin{corollary} \label{corollary: small cancellation}
Every non-negative real number arises as the stable commutator length of an element in some infinitely presented $C'(1/6)$ small cancellation group. 
\end{corollary}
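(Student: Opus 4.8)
The plan is to deduce the corollary directly from Proposition \ref{proposition:approx scl groups} by exhibiting, for an arbitrary $\alpha \in \R^{\geq 0}$, two sequences $(m_i)_{i \in \N}$ and $(n_i)_{i \in \N}$ of natural numbers which satisfy the two hypotheses of the proposition — namely $n_i < n_{i+1}$ and $\frac{m_i}{n_i} \geq \frac{m_{i+1}}{n_{i+1}}$ for all $i$ — and for which $\lim_{i \to \infty} \frac{m_i}{n_i} = \alpha$. Once such sequences are in hand, the proposition produces an infinitely presented $C'(1/6)$ small cancellation group $\Gcl = \Gcl((m_i)_{i\in\N},(n_i)_{i\in\N})$ together with an element $\ttt \in \Gcl$ satisfying $\scl_\Gcl(\ttt) = \lim_i \frac{m_i}{n_i} = \alpha$, which is exactly the assertion.

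First I would choose a strictly decreasing sequence of rationals $q_1 > q_2 > \cdots$ with $q_i \to \alpha$; such a sequence exists by density of $\Q$ in $\R$ (for instance, pick $q_1 \in \Q$ with $q_1 > \alpha$ and then inductively $q_{i+1} \in \Q \cap (\alpha, \min\{q_i, \alpha + 1/(i+1)\})$, and in the degenerate case $\alpha = 0$ take simply $q_i = 1/i$). The terms of $(q_i)$ are then weakly decreasing, so the monotonicity hypothesis will come for free; the only remaining work is to realise the $q_i$ as fractions $m_i/n_i$ with \emph{strictly increasing} denominators.

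This is handled by a rescaling trick. Write each $q_i = a_i/b_i$ with $a_i, b_i$ positive integers. Put $n_1 = b_1$, $m_1 = a_1$, and, having defined $n_i$, choose an integer $k_{i+1} \geq 1$ large enough that $k_{i+1} b_{i+1} > n_i$, and set $n_{i+1} = k_{i+1} b_{i+1}$ and $m_{i+1} = k_{i+1} a_{i+1}$. Then $n_i < n_{i+1}$ for every $i$, while $\frac{m_i}{n_i} = \frac{a_i}{b_i} = q_i$ is unchanged; hence $\frac{m_i}{n_i} \geq \frac{m_{i+1}}{n_{i+1}}$ and $\lim_i \frac{m_i}{n_i} = \lim_i q_i = \alpha$. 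Feeding $(m_i)_{i\in\N}$ and $(n_i)_{i\in\N}$ into Proposition \ref{proposition:approx scl groups} finishes the argument.

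There is essentially no hard step: all the substance of the corollary sits inside Proposition \ref{proposition:approx scl groups}, and the reduction uses only density of $\Q$ in $\R$ together with the observation that multiplying numerator and denominator by a common factor lets one force the denominators to grow without disturbing either the monotonicity of the ratios or their limit. The one point worth flagging is that the proposition outputs an \emph{infinitely} presented group for every admissible pair of sequences, so the conclusion is uniform in $\alpha$, including when $\alpha$ is rational (in particular $\alpha = 0$), where one might otherwise have expected the example to be finitely presented.
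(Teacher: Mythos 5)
Your argument is correct and is essentially the paper's proof: both deduce the corollary immediately from Proposition \ref{proposition:approx scl groups} by producing, via density of $\Q$ in $\R$, a decreasing rational sequence $m_i/n_i \to \alpha$ with strictly increasing denominators. You simply spell out the rescaling step that forces $n_i < n_{i+1}$, which the paper leaves implicit.
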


It would be interesting to promote the results of Theorem \ref{theorem:classification of scl on rp groups} to finitely presented groups.
There are several ways to embed a recursively presented group $G$ into a finitely presented group~$H$; see Theorem \ref{thm:higman embedding}. 
However, for such an embedding $\Phi \col G \to H$ we just get an upper bound $\scl_H(\Phi(g)) \leq \scl_G(g)$ for $g \in G$; see Proposition \ref{prop:scl basic prop}. 
Thus we ask:
\begin{quest} \label{quest: rp fp groups}
Let $G$ be a recursively presented group and let $g \in G$ be an element. Is there a finitely presented group $H$ and a map $\Phi_g \col G \to H$ such that $\scl_H(\Phi_g(g)) = \scl_G(g)$?
\end{quest}
A positive answer to this question would prove Theorem \ref{theorem:classification of scl on rp groups} for finitely presented groups and answer the questions of Calegari \cite[Questions 5.47 and 5.48]{Calegari}.

\subsection*{Organisation}
This paper is organised as follows. In Section \ref{sec:stable commutator length} we recall well known results about stable commutator length and in Section \ref{sec:tm and rp groups}  we recall results about computablility and recursive presentations.
In Section \ref{sec:van kampen on surfaces} we introduce van Kampen diagrams on surfaces and show how they may be used to estimate $\scl$ in Proposition \ref{prop:admissible maps and van Kampen diagrams}.
In Section \ref{sec:proof of Prop} we will use these techniques to prove Proposition \ref{proposition:approx scl groups}. Theorem \ref{theorem:classification of scl on rp groups} and  Corollary \ref{corollary: small cancellation} will be proved in Section \ref{sec:proof of theorems a,b,d}.

\subsection*{Acknowledgements}
I would like to thank Martin Bridson for many very helpful discussions. Moreover, I would like to thank Joe Chen, Jan Steinebrunner for sharing their insights on stable commutator length and computability. I would like to thank Clara L{\"o}h for her insights on computability and comments on a previous version of this article.

\section{Stable Commutator Length} \label{sec:stable commutator length}

In Section \ref{subsec:defn and basic results scl} we will briefly introduce stable commutator length and state well known results.
In Section  \ref{subsec:examples of scl} we recall known results about stable commutator length in finitely presented groups. In Section \ref{subsec:scl and simvol} we state the relationship to simplicial volume.
The reference for stable commutator length is the book of Calegari \cite{Calegari}.

\subsection{Stable Commutator Length: Preliminaries} \label{subsec:defn and basic results scl}
Let $G$ be a group and let $G'=[G,G]$ be the commutator subgroup. Every element $g \in G'$ is the product of commutators $[x,y] := x y x^{-1} y^{-1}$ and we define the \emph{commutator length} of $g$ in $G'$ to be
$$
\cl_G(g) := \min \{n \mid \exists_{x_1, \ldots, x_n, y_1, \ldots, y_n} g = [x_1, y_1] \cdots [x_n,y_n] \}.
$$ 
Similarly, define the \emph{stable commutator length} ($\scl(g)$) of $g$ as
$$
\scl_G(g) := \lim_{n \to \infty} \frac{\cl_G(g^n)}{n}.
$$
It is useful to extend this invariant also for elements of the whole group $G$. If $g \in G$ is an element such that there is an $N$ with $g^N \in G'$ then define $\scl_G(g) := \frac{\scl_G(g^N)}{N}$. Else, set $\scl_G(g) = \infty$. 

There is a geometric interpretation of $\scl$.
Let $X$ be a topological space with fundamental group $\pi_1(X)$ and let $\gamma \col S^1 \to X$ be a loop.
Let $f$ be map $f \col \Sigma \to X$ from a compact oriented surface $\Sigma$ with boundary $\partial \Sigma$ to $X$.
Then the pair $(f, \Sigma)$ is called \emph{admissible to $\gamma$} if there is a commutative diagram
$$
\begin{tikzcd}
\partial \Sigma \arrow[r, "\iota"] \arrow[d, "\partial f"] & \Sigma \arrow[d, "f"] \\
 S^1 \arrow[r, "\gamma"] & X
\end{tikzcd} 
$$
where $\iota \col \partial \Sigma \to \Sigma$ is the natural inclusion of the boundary.
Observe that every component of $\partial \Sigma$ has a orientation induced by the orientation of $\Sigma$. The \emph{degree} $n(f, \Sigma)$ of $(f, \Sigma)$ is defined via $(\partial f)_*[\partial \Sigma] = n(f, \Sigma) [S^1 ]$ in  $H_1(S^1; \Z)$. 
An admissible map is called \emph{positive} if the degree of $\partial f$ on every component of $\partial \Sigma$ is positive. 
If $\Sigma = \sqcup_{i=1}^n \Sigma_i$ where $\Sigma_i$ is connected, we define
$$
\chi^-(\Sigma) = \sum_{i=1}^n \min \{ \chi(\Sigma_i), 0 \},
$$ 
where $\chi$ is the ordinary Euler characteristic.
We may use positive admissible pairs $(f,\Sigma)$ to compute stable commutator length:
\begin{prop}[$\scl$ via admissible maps, \protect{\cite[Proposition 2.10]{Calegari}}] \label{prop:scl via admissible maps}
Let $X$ be a topological space with fundamental group $\pi_1(X)$ and let $\gamma \col S^1 \to X$ be a loop representing an element~$g \in \pi_1(X)$. Then
$$
\scl_{\pi_1(X)}(g) = \inf_{(f,\Sigma) } \frac{-\chi^-(\Sigma)}{2 n(f, \Sigma)}
$$
where the infimum is taken over all positive admissible maps $(f, \Sigma)$ for $\gamma$.
\end{prop}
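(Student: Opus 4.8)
The plan is to prove the two inequalities $v(g)\le\scl_{\pi_1(X)}(g)$ and $\scl_{\pi_1(X)}(g)\le v(g)$ separately, where $v(g)$ denotes the infimum on the right-hand side and $G=\pi_1(X)$ (the argument is the classical surface-theoretic one). The workhorse is the correspondence between commutator length and genus together with its multi-boundary refinement: since $\pi_1$ of a genus-$p$ surface with $b\ge1$ boundary circles is $\langle a_1,b_1,\dots,a_p,b_p,c_1,\dots,c_b\mid \prod_j[a_j,b_j]\prod_i c_i\rangle$ (a free group, the relation being used to eliminate one generator), applying $f_*$ to the relation of such a surface mapping to $X$, choosing the basepoint on the boundary circle $c_1$, and collecting the $g$-powers to the front (they commute with one another) expresses $g^{m}$, where $m=\sum_i d_i$ is the total boundary degree, as a product of $p+b-1$ commutators; in particular $\cl_G(g^{m})\le p+b-1$, and running the construction backwards gives the converse for $b=1$. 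I may assume $g\in[G,G]$: if no power of $g$ lies in $[G,G]$ there are no admissible surfaces and both sides are $+\infty$, otherwise one passes to a power.

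\textbf{The inequality $v(g)\le\scl_G(g)$.} This is the easy direction. For each $n$ take a connected one-holed surface $S_n$ of genus $\cl_G(g^n)$ with $f_n\col S_n\to X$ whose boundary loop represents $g^n$; homotoping the boundary map to the standard $n$-fold cover of $\gamma$ and extending over $S_n$ makes $(f_n,S_n)$ a positive admissible pair of degree $n$ with $-\chi^-(S_n)=2\cl_G(g^n)-1$ (or $S_n$ is a disc if $g^n=1$, giving ratio $0$). Hence $v(g)\le\frac{\cl_G(g^n)}{n}-\frac{1}{2n}$ for all $n$, and letting $n\to\infty$, using $\scl_G(g)=\lim_n\cl_G(g^n)/n$, gives $v(g)\le\scl_G(g)$.

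\textbf{The inequality $\scl_G(g)\le v(g)$: reductions and the estimate.} It suffices to show $\scl_G(g)\le\frac{-\chi^-(\Sigma)}{2n(f,\Sigma)}$ for every positive admissible $(f,\Sigma)$. Discarding closed components and disc/sphere components does not increase the ratio (a disc forces $g$ to have finite order, in which case $\scl_G(g)=0$ and there is nothing to prove), and positivity already rules out annulus components; so, by the mediant inequality $\min_i\frac{a_i}{b_i}\le\frac{\sum_i a_i}{\sum_i b_i}$ applied to the connected pieces, we may assume $\Sigma$ is connected with $\chi(\Sigma)<0$. Let $\Sigma$ have genus $p$, boundary circles of positive degrees $d_1,\dots,d_b$, and $n=n(f,\Sigma)=\sum_i d_i$. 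The estimate above gives $\cl_G(g^n)\le p+b-1$, hence $\scl_G(g)\le\frac{p+b-1}{n}=\frac{-\chi^-(\Sigma)+b}{2n}$ — this overshoots the target by $\frac{b}{2n}$, and the rest of the argument removes this error by spreading the degree over a large connected cover.

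\textbf{Killing the error via covers --- the main obstacle.} The hard part will be the following: for infinitely many $k$ there is a connected degree-$k$ cover $\pi_k\col\widetilde\Sigma_k\to\Sigma$ whose number $\widetilde b_k$ of boundary components is $o(k)$. When $b\ge2$ this is elementary: the cyclic cover attached to a homomorphism $\pi_1(\Sigma)\to\Z/k$ sending $c_1,\dots,c_{b-1}$ to $1$ and $a_1,b_1,\dots,a_p,b_p$ to $0$ has each of those $b-1$ circles lifting to a single circle and the last circle lifting to $\gcd(b-1,k)$ circles, so $\widetilde b_k=b$ whenever $\gcd(k,b-1)=1$. The delicate case is $b=1$, where the boundary word lies in $[\pi_1(\Sigma),\pi_1(\Sigma)]=[F_{2p},F_{2p}]$ and no abelian cover detects it; here one instead exhibits a transitive permutation representation $\rho\col F_{2p}\to S_k$ under which the boundary word becomes a $k$-cycle (for instance, for odd $k=2m+1$, send $a_1\mapsto(1\,2\,\cdots\,k)$, $b_1\mapsto(1\,2)(3\,4)\cdots(2m-1\,2m)$ and the remaining generators to the identity; a direct check shows $[a_1,b_1]$ is a single $k$-cycle), so that $\widetilde b_k=1$. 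This existence of covers with few boundary components is the only genuinely nontrivial input. Granting it, set $\widetilde f_k=f\circ\pi_k$, a positive admissible pair of degree $kn$ with $\chi(\widetilde\Sigma_k)=k\chi(\Sigma)$ and genus $\widetilde p_k=\tfrac12\!\left(2-\widetilde b_k-k\chi(\Sigma)\right)$; the estimate applied to $\widetilde f_k$ gives
$$
\cl_G\!\left(g^{kn}\right)\le\widetilde p_k+\widetilde b_k-1=\frac{\widetilde b_k-k\chi(\Sigma)}{2}.
$$
Therefore $\scl_G(g)\le\frac{\cl_G(g^{kn})}{kn}\le\frac{-\chi(\Sigma)}{2n}+\frac{\widetilde b_k}{2kn}$, and letting $k\to\infty$ along the admissible values the error term vanishes, so $\scl_G(g)\le\frac{-\chi(\Sigma)}{2n}=\frac{-\chi^-(\Sigma)}{2n}$. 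Taking the infimum over all positive admissible pairs yields $\scl_G(g)\le v(g)$, which together with the first inequality proves $\scl_{\pi_1(X)}(g)=v(g)$.
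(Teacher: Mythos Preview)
The paper does not prove this proposition; it is quoted from Calegari's monograph and used as a black box throughout. Your argument is essentially the standard one (and is Calegari's): the commutator-length/genus correspondence gives the easy direction, and passing to finite covers with a bounded number of boundary components kills the $+\tfrac{b}{2n}$ overshoot in the hard direction. So there is nothing to compare against in this paper, and your write-up is a faithful reconstruction of the cited proof.

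One small slip worth fixing: positivity does \emph{not} by itself rule out annulus components. The two boundary circles of an oriented annulus, with their induced orientations, represent inverse conjugacy classes in $\pi_1$ of the annulus, so a positive admissible annulus exists precisely when some $g^{d_1}$ is conjugate to $g^{-d_2}$ with $d_1,d_2>0$; this can certainly happen (e.g.\ whenever $g$ is conjugate to $g^{-1}$). However, in that situation the $k$-fold cyclic self-cover of the annulus is again a positive admissible annulus of degree $k(d_1+d_2)$, giving $\cl_G\!\bigl(g^{k(d_1+d_2)}\bigr)\le 1$ for every $k$ and hence $\scl_G(g)=0$; the desired inequality $\scl_G(g)\le 0=\tfrac{-\chi^-}{2n}$ is then immediate. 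So the annulus case should be disposed of alongside the disc case rather than dismissed, after which the rest of your argument goes through.
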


We collect some well-known results about stable commutator length:

\begin{prop} \label{prop:scl basic prop}
Let $G$ and $H$ be groups, let $g,h \in G$ be elements and let $\Phi \col G \to H$ be a homomorphism. Then
\begin{itemize}
\item[(i)] $\scl_G(g) \geq \scl_H(\Phi(g))$,
\item[(ii)] $\scl_G(gh) \leq \scl_G(g) + \scl_G(h) + \frac{1}{2}$,
\item[(iii)]  $\scl_G([g,h]) \leq \frac{1}{2}$,
\item[(iv)]  $\scl_G(g^n) = n \cdot \scl_G(g)$, for every $n \in \N$, and 
\item[(v)] $\scl_G(g^{-1}) = \scl_G(g)$. 
\end{itemize} 
\end{prop}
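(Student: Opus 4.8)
The plan is to deduce (i), (iv), (v) directly from the definition, (iii) geometrically from Proposition~\ref{prop:scl via admissible maps}, and (ii) from a surface-gluing argument based on the same proposition. In every case I would first prove the statement for elements of $G'$, where $\scl_G$ is the honest limit $\lim_n\cl_G(g^n)/n$ --- which exists because $n\mapsto\cl_G(g^n)$ is subadditive, so Fekete's lemma applies and the limit equals $\inf_n\cl_G(g^n)/n$ --- and then pass to the general case through the scaling convention $\scl_G(x)=\scl_G(x^N)/N$ valid whenever $x^N\in G'$: one uses $\Phi(g)^N=\Phi(g^N)\in H'$ for (i), $(g^n)^N=(g^N)^n\in G'$ for (iv), and $(g^{-1})^N=(g^N)^{-1}\in G'$ for (v). When an element has infinite order in the abelianization the corresponding $\scl$ is $\infty$ and the inequality in question is vacuous; in particular for (ii) one notes that $\scl_G(g),\scl_G(h)<\infty$ forces $g,h$ to have finite order in $G^{\mathrm{ab}}$, hence $\scl_G(gh)<\infty$, so the statement is meaningful, and is trivial otherwise.

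The three definitional items are then short. For (i), a homomorphism sends a product of $n$ commutators to a product of $n$ commutators, so $\cl_H(\Phi(g)^m)=\cl_H(\Phi(g^m))\le\cl_G(g^m)$ for all $m$; dividing by $m$ and taking $m\to\infty$ gives $\scl_H(\Phi(g))\le\scl_G(g)$ on $G'$. For (v), from $[x,y]^{-1}=[y,x]$ the inverse of a product of $n$ commutators is again a product of $n$ commutators, so $\cl_G(g^{-n})=\cl_G((g^n)^{-1})=\cl_G(g^n)$ and the limits agree. For (iv), subadditivity gives $\cl_G(g^m)/m\to\scl_G(g)$ on $G'$, and restricting to the subsequence $m=nk$ yields $\scl_G(g^n)=\lim_k\cl_G(g^{nk})/k=n\cdot\lim_k\cl_G(g^{nk})/(nk)=n\,\scl_G(g)$.

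For (iii) I would exhibit an explicit bounding surface: the once-punctured torus $\Sigma$ has free fundamental group on two generators $a,b$, its single boundary component represents $[a,b]$, and $\chi^-(\Sigma)=\chi(\Sigma)=-1$. Taking a space $X$ with $\pi_1(X)=G$ and fixed loops for $g$ and $h$, realise the homomorphism $a\mapsto g$, $b\mapsto h$ by a map $f\col\Sigma\to X$; then $(f,\Sigma)$ is a positive admissible pair of degree $1$ for the loop representing $[g,h]$, so Proposition~\ref{prop:scl via admissible maps} gives $\scl_G([g,h])\le\frac{-\chi^-(\Sigma)}{2\cdot1}=\frac12$.

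Item (ii) is the one requiring a genuine idea, and I expect the surface bookkeeping to be the main obstacle. Assuming $\scl_G(g),\scl_G(h)<\infty$ (otherwise the inequality is vacuous), fix $\varepsilon>0$, let $X$ be a wedge of three circles carrying loops $\gamma_g,\gamma_h,\gamma_{gh}$ for $g,h,gh$, and choose positive admissible surfaces $(f_g,S_g)$, $(f_h,S_h)$ for $\gamma_g,\gamma_h$ with $-\chi^-(S_g)/(2n_g)<\scl_G(g)+\varepsilon$ and $-\chi^-(S_h)/(2n_h)<\scl_G(h)+\varepsilon$; after replacing each by a disjoint union of copies I may assume a common degree $n$. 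Since $g\cdot h\cdot(gh)^{-1}=1$ in $\pi_1(X)$, a pair of pants $P$ with $\chi(P)=-1$ maps to $X$ carrying its three boundary circles once over $\gamma_g$, once over $\gamma_h$, and once (orientation-reversed) over $\gamma_{gh}$. After a standard normalization of $S_g$ and $S_h$ so that each boundary component runs exactly once over the relevant loop (cf.\ \cite[Chapter 2]{Calegari}), gluing $n$ copies of $P$ onto $S_g\sqcup S_h$ --- pairing one $\gamma_g$-boundary with one $\gamma_h$-boundary per copy and exposing a fresh $\gamma_{gh}$-boundary --- yields a positive admissible surface $\Sigma$ for $\gamma_{gh}$ of degree $n$ with $-\chi^-(\Sigma)\le-\chi^-(S_g)-\chi^-(S_h)+n$ (each pants contributes $-\chi=1$; sphere and disk components are discarded). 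Then Proposition~\ref{prop:scl via admissible maps} gives $\scl_G(gh)\le\frac{-\chi^-(S_g)-\chi^-(S_h)+n}{2n}<\scl_G(g)+\scl_G(h)+\frac12+2\varepsilon$, and $\varepsilon\to0$ finishes the proof; alternatively the whole inequality may be quoted from \cite[Chapter 2]{Calegari}.
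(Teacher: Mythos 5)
The paper gives no argument of its own for this proposition: it simply refers the reader to \cite[Lemmata 2.4, 2.24, 2.75, 2.76]{Calegari}. So there is nothing in the paper to compare you against, and any correct self-contained proof is a genuine addition. Your treatments of (i), (iv) and (v) from the definition (subadditivity/Fekete, homomorphisms carrying products of $n$ commutators to products of $n$ commutators, $[x,y]^{-1}=[y,x]$) are correct, as is the reduction to $G'$ via the scaling convention. Your proof of (iii) via the once-punctured torus of Euler characteristic $-1$ and degree $1$, combined with Proposition~\ref{prop:scl via admissible maps}, is the standard argument and is fine.

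The one place I would push back is (ii). The pants-gluing sketch relies on a ``standard normalization of $S_g$ and $S_h$ so that each boundary component runs exactly once over the relevant loop,'' and that step is not free. A pair of pants carrying its three boundary circles to $\gamma_g$, $\gamma_h$, $\gamma_{gh}^{-1}$ with degrees $a$, $a$, $-a$ exists only for $a=1$ (for $a\ge2$ the word $g^{-a}h^{-a}(gh)^a$ is nontrivial, so the ``pants'' must acquire genus of order $a$), so the gluing genuinely needs all boundary components of both $S_g$ and $S_h$ to wrap exactly once. The obvious way to force this---attaching a planar ``comb'' to split a degree-$a$ boundary circle into $a$ degree-$1$ circles---costs $a-1$ in $-\chi^-$ per boundary component, so normalizing $S_g$ and $S_h$ together can cost up to $n$ each, and the resulting bound degrades to $\scl_G(gh)\le\scl_G(g)+\scl_G(h)+\tfrac{3}{2}$, not $+\tfrac12$. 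To recover the sharp constant one needs a different mechanism (Calegari's Lemma~2.24 is proved before Bavard duality, and the constant $\tfrac12$ does not fall out of naive surface surgery). As written the step is a real gap, though you do flag the option of simply quoting \cite{Calegari}, which is exactly what the paper does.
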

See \cite[Lemmata 2.4, 2.24, 2.75, 2.76]{Calegari} for a proof of these statements.

\subsection{Stable Commutator Length in Finitely Presented Groups} \label{subsec:examples of scl}

Many classes of finitely presented groups are known to have \emph{rational} stable commutator length.
Calegari \cite{Calegari_rational} showed that free groups have this property and that $\scl$ may be computed efficiently in free groups in polynomial time. Lvzhou Chen \cite{Chen-scl} gernalised rationality to certain graphs of groups, including Baumslag Solitair groups.

Another class of groups where the stable commutator length is well understood comes from actions of groups on the circle. Let $G$ be a group with vanishing stable commutator length acting on a circle $\rho \col G \to \mathrm{Homeo}^+(S^1)$. Then the stable commutator length of the central extension~$\tilde{G}$ of $G$ associated to the Euler class of $\rho$ 
can be fully understood via rotation numbers \cite[Section 5.2]{Calegari}.
Zhuang \cite{zhuang} used this construction to provide the first example of finitely presented groups with non-algebraic stable commutator length,
see also \cite[Chapter 5.2]{Calegari}.
In all such examples the stable commutator length is non-algebraic unless it is rational \cite[Question 5.48]{Calegari}.

\subsection{Stable Commutator Length and Simplicial Volume} \label{subsec:scl and simvol}
Stable commutator length in finitely presented groups may be used to construct $4$-manifolds with controlled simplicial volume. For every finitely presented group $G$ with $H_2(G;\R) = 0$ and $g \in G'$ there is an orientable closed connected $4$-manifold $M$ such that $\| M \| = 48 \cdot \scl_G(g)$ \cite{Heuer-Loeh}. 
By applying these results to certain finitely presented groups related to the examples of Section \ref{subsec:examples of scl}, the authors could show that every rational number arises as the simplicial volume of a $4$-manifold. 
Thus, Theorem~\ref{theorem:classification of scl on rp groups} and Question \ref{quest: rp fp groups} also indicate which non-negative real numbers arise as the simplicial volume of $4$-manifolds.

\section{Turing Machines and Recursively Presented Groups} \label{sec:tm and rp groups}
We recall well-known results on computability and recursive presentation in Section \ref{subsec:recursive sets and presentations}. 
Encoding such sets in the rational numbers gives rise to computable and right-computable numbers which are discussed in Section \ref{subsec:computable and right computable numbers}.

\subsection{Recursive Sets and Recursive Presentations} \label{subsec:recursive sets and presentations}
Let $\Acl$ be a set which may be encoded as the input set of some Turing machine. Then a subset $\Bcl \subset \Acl$ is called \emph{computable} if there is some Turing machine which decides if an element $a \in \Acl$ lies in $\Bcl$ or not.
A subset $\Bcl \subset \Acl$ is called \emph{recusively enumerable} if there is a Turing machine which enumerates all elements in $\Bcl$. Equivalently there is a Turing machine with input $\Acl$ and whose halting set is $\Bcl$. 
See \cite{Turing} for Turings original paper.

Let $S$ be an alphabet. Then we may interpret $F(S)$ as the input set of some Turing machine and thus we may define recursively enumberable subsets $\Bcl \subset F(S)$.
We say that a presentation $\langle S \mid \Rcl \rangle$ is \emph{recursive} if and only if $\Rcl \subset F(S)$ is recursively enumerable. A group $G$ is \emph{recursively presented} if it admits a recursive presentation.
As there are only countably many Turing machines there are only countably many recursively presented groups.

Let $G$ be a finitely presented group with finite presentation $\langle S \mid \Rcl \rangle$. 
A word in $F(S)$ represents the trivial element in $G$ if and only if it is the product of conjugates of elements of $\Rcl^\pm$ in $F(S)$. We may enumerate all such elements and thus the set of elements $w \in F(S)$ that represent the identity in $G$ is recursively enumerable. This shows that every finitely generated subgroup of a finitely presented group is recursively presented.

Higman showed that indeed the opposite is true:
\begin{thm}[\protect{Higman's Embedding Theorem, \cite{Higman}}] \label{thm:higman embedding}
A finitely generated group $G$ is recursively presented if and only if it is a finitely generated subgroup $G < H$ of a finitely presented group $H$.
\end{thm}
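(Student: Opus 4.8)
The forward implication---that a finitely generated subgroup of a finitely presented group is recursively presented---is already established in the paragraph preceding the statement, so I concentrate on the converse. Let $G$ be finitely generated and recursively presented; fix a presentation $G = \langle x_1, \dots, x_n \mid \Rcl \rangle$ with $F := F(x_1, \dots, x_n)$ free of finite rank and $\Rcl \subset F$ recursively enumerable, and put $N := \langle\langle \Rcl \rangle\rangle \trianglelefteq F$, so $G = F/N$. The plan is to run Higman's method of \emph{benign subgroups}.

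Call a subgroup $A$ of a finitely presented group $B$ \emph{benign in} $B$ if there is a finitely presented group $L$ containing $B$ and a finitely generated subgroup $M \leq L$ with $M \cap B = A$. First I would isolate the two structural facts bracketing the argument. On the input side: every finitely generated subgroup $A \leq B$ is benign in $B$ (take $L = B$, $M = A$), and a handful of further explicitly described subgroups of free groups and of their finite direct products are benign by direct construction---these are the atoms. On the output side: if $N \trianglelefteq F$ is benign in $F$, then $F/N$ embeds into a finitely presented group; this is obtained from a witness $(L,M)$ by a standard further construction---forming an amalgamated product of two copies of $L$ over $M$ and adding finitely many relations identifying the two copies of $F$---which preserves finite presentability, with Britton's Lemma identifying the surviving image of $F$ as $F/N$.

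Next I would assemble the closure toolbox: inside a fixed finitely presented group, benign subgroups are closed under finite intersections, under joins of two benign subgroups, and under pre-images and images along the natural homomorphisms relating a group with its subgroups, quotients, free products and finite direct products. Each such statement is a direct construction of the witnessing finitely presented overgroup from the given witnesses via amalgamated products and HNN extensions along finitely generated subgroups, using that these operations preserve finite presentability; Britton's Lemma (normal forms in HNN extensions) is the recurring device for verifying the condition $M \cap B = A$.

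The heart of the proof---and the step I expect to be the main obstacle---is translating recursive enumerability of $\Rcl$ into benignity of $N$ in $F$. Encode the elements of $F$ as natural numbers via an explicit G{\"o}del coding, so that $\Rcl$ corresponds to a recursively enumerable set $S \subseteq \N$, and fix a model of computation accepting $S$ (a Turing machine, or, more convenient for the bookkeeping, a counter/Minsky register machine). One then shows, by induction along the computation, that a coding subgroup attached to $S$ inside a suitable finitely generated group is benign: each elementary move of the machine is mirrored by one of the benignity-preserving operations of the toolbox, so the subgroup recording the accepting computations is assembled as a benign subgroup. Transporting this benignity back through the coding and applying the toolbox once more yields that $N = \langle\langle \Rcl \rangle\rangle$ is benign in $F$; combining this with the output-side fact above gives an embedding of $G = F/N$ into a finitely presented group. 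Essentially all of Higman's original argument lives in this last paragraph---the choice of coding, the choice of machine model, and the verification that every machine step is faithfully realised by a benignity-preserving group operation.
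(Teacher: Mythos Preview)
The paper does not give its own proof of this theorem: it is stated as a classical result with a citation to Higman's original article, and no proof environment follows it. The paragraph immediately preceding the statement does sketch the easy direction (every finitely generated subgroup of a finitely presented group is recursively presented), exactly as you observe, but the converse is simply attributed to Higman without argument. So there is nothing in the paper to compare your proposal against.

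For what it is worth, your sketch is a faithful high-level account of Higman's benign-subgroup approach: the reduction to showing $N$ is benign in $F$, the closure properties of benignity under HNN/amalgam constructions, and the encoding of a machine accepting $\Rcl$ as a chain of benignity-preserving operations. That is indeed where all the work lies, and your identification of the ``output side'' step (passing from benignity of $N$ to an embedding of $F/N$) and the ``heart'' step (encoding computation) is accurate. If you were actually asked to supply a proof here rather than a citation, this outline would be the right skeleton, though of course each closure property and the machine simulation require substantial verification that you have only gestured at.
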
 
Indeed there is a \emph{universal} finitely presented group $H$ containing all finitely generated, recursively presented groups as its subgroups.

\subsection{Computable and Right-Computable Numbers}
\label{subsec:computable and right computable numbers}
We can now define computable and right-computable numbers.

\begin{defn}[Computable and Right-Computable Numbers] \label{defn:computable and right computable numbers}
A real number $\alpha \in \R$ is called \emph{computable} if $\{ q \in \Q \mid \alpha < q \}$ is computable and \emph{right-computable} if this set is recursively enumberable. We denote by $\RC \subset \R$ the set of all right-computable real numbers and by $\RC^{\geq 0}$ the set of all such non-negative numbers.
\end{defn}

\begin{exmp} \label{exmp:right computable numbers}
Let $\Acl \subset \N$ be a subset. Then it was observed by Specker \cite{Specker} that the number
$$
\alpha_{\Acl} = \sum_{i \not \in \Acl} 2^{-i}
$$
is computable if and only $\Acl$ is computable as a subset of $\N$ and right-computable if and only if $\Acl$ is recursively enumerable.
It is well known that any encoding of the halting set is recursively enumberable but not computable. Hence there are right-computable numbers that are not computable.
\end{exmp}

\begin{prop}[Properties of $\RC^{\geq 0}$] \label{prop:properties of rc}
Every non-negative algebraic number and every computable number is in $\RC^{\geq 0}$. However, there are numbers in $\RC^{\geq 0}$ that are not computable.
The set $\RC^{\geq 0}$ is closed under addition but not under subtraction, i.e.\ there are elements $\alpha, \beta \in \RC^{\geq 0}$ such that $\alpha - \beta > 0$ but $\alpha - \beta \not \in \RC^{\geq 0}$.
\end{prop}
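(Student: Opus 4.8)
The plan is to verify each claim in turn, treating the right-computable numbers as the key combinatorial object and reducing everything to statements about recursively enumerable subsets of $\Q$. For a real number $\alpha$, write $U_\alpha = \{q \in \Q \mid \alpha < q\}$; by definition $\alpha \in \RC$ iff $U_\alpha$ is recursively enumerable. The three things to check are: (1) every non-negative algebraic number and every computable number lies in $\RC^{\geq 0}$; (2) $\RC^{\geq 0}$ is closed under addition; (3) $\RC^{\geq 0}$ is not closed under subtraction.

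For (1), if $\alpha$ is computable then $U_\alpha$ is computable, hence recursively enumerable, so $\alpha \in \RC$; the non-negativity is just a hypothesis. For algebraic $\alpha \geq 0$, I would argue that $\alpha$ is in fact computable: given a defining polynomial $p \in \Z[x]$ with $p(\alpha) = 0$, one can separate the finitely many real roots of $p$ by a computable root-isolation procedure (e.g.\ Sturm sequences) and then, for any rational $q$, decide whether $\alpha < q$ by evaluating $p$ and its derivatives at $q$ with rational arithmetic and locating $q$ relative to the isolating intervals. Since all the data ($p$, its Sturm sequence, the isolating rationals) are exact rational/integer data, this is an actual decision procedure, so $U_\alpha$ is computable and $\alpha \in \RC^{\geq 0}$. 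The existence of a non-computable element of $\RC^{\geq 0}$ is exactly Example \ref{exmp:right computable numbers}: take $\Acl \subset \N$ to be (an encoding of) the halting set, which is recursively enumerable but not computable, and set $\alpha = \alpha_{\Acl} = \sum_{i \notin \Acl} 2^{-i}$; then $\alpha \in [0,1]$ is right-computable but not computable by Specker's observation.

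For (2), suppose $\alpha, \beta \in \RC^{\geq 0}$ with recursive enumerations of $U_\alpha$ and $U_\beta$. Then $\alpha + \beta \geq 0$, and I claim
$$
U_{\alpha+\beta} = \{ q \in \Q \mid \exists\, r,s \in \Q \text{ with } r > \alpha,\ s > \beta,\ r + s \le q \}.
$$
Indeed $\alpha + \beta < q$ iff there exist rationals $r > \alpha$ and $s > \beta$ with $r + s \le q$ (pick $r,s$ slightly above $\alpha,\beta$ with $r+s < q$; conversely if such $r,s$ exist then $\alpha+\beta < r+s \le q$). A Turing machine enumerates $U_{\alpha+\beta}$ by dovetailing the enumerations of $U_\alpha$ and $U_\beta$: whenever it has listed some $r \in U_\alpha$ and $s \in U_\beta$, it outputs every rational $q \ge r+s$. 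Hence $U_{\alpha+\beta}$ is recursively enumerable and $\alpha+\beta \in \RC^{\geq 0}$.

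For (3), the idea is that right-computability is a genuinely one-sided notion: $\alpha \in \RC$ does not imply $-\alpha \in \RC$, because the set $\{q \mid -\alpha < q\} = \{q \mid \alpha > -q\} = \{q \mid -q \in L_\alpha\}$ is controlled by the \emph{lower} cut $L_\alpha = \{p \in \Q \mid p < \alpha\}$, and a recursively enumerable upper cut need not have a recursively enumerable lower cut. Concretely, take $\alpha = 1$ and $\beta = 1 - \alpha_{\Acl}$ where $\Acl$ is a recursively enumerable, non-computable subset of $\N$ (e.g.\ the halting set), so $\alpha_{\Acl} \in (0,1)$. Then $\beta \in (0,1)$ is right-computable (one enumerates $q > \beta$ by enumerating $q' < \alpha_{\Acl}$, using that $\alpha_{\Acl}$ being non-computable and lying in $(0,1)$ still has a recursively enumerable \emph{lower} cut — this needs a short argument, see below), while $\alpha = 1$ is trivially right-computable, both are non-negative, and $\alpha - \beta = \alpha_{\Acl} > 0$. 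If $\alpha_{\Acl}$ were right-computable, i.e.\ $U_{\alpha_{\Acl}}$ recursively enumerable, then combined with its recursively enumerable lower cut we could decide membership in $\Acl$: to test whether $n \in \Acl$, note that $n \in \Acl$ affects the $n$-th binary digit of $\alpha_{\Acl}$, and knowing $\alpha_{\Acl}$ to within $2^{-n-1}$ from both sides (which two r.e.\ cuts eventually provide) determines that digit; this would make $\Acl$ computable, a contradiction. Hence $\alpha_{\Acl} = \alpha - \beta \notin \RC^{\geq 0}$.

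The main obstacle I expect is the bookkeeping in (3): one must be careful that the number constructed as $\alpha - \beta$ really fails right-computability rather than accidentally remaining r.e., and the cleanest route is the dichotomy ``$U_\gamma$ and $L_\gamma$ both r.e.\ $\iff$ $\gamma$ computable,'' applied to $\gamma = \alpha_{\Acl}$. Concretely, the lower cut $L_{\alpha_{\Acl}}$ is r.e.: a rational $p$ satisfies $p < \alpha_{\Acl}$ iff $p < 0$ or there is a finite stage at which enough elements of $\Acl$ have been enumerated so that the corresponding partial sum $\sum_{i \le N,\, i \notin \Acl\text{-so-far}} 2^{-i}$ minus the tail bound $2^{-N}$ already exceeds $p$ — wait, this overestimates $\alpha_{\Acl}$, so instead: since removing more indices only decreases $\alpha_{\Acl}$, a \emph{lower} bound on $\alpha_{\Acl}$ is not obtained this way, and in fact it is the \emph{upper} cut of $\alpha_{\Acl}$ that is r.e.\ (larger $\Acl$-approximations give smaller sums, i.e.\ better upper bounds), which is exactly why $\alpha_{\Acl}$ is right-computable. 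Thus the correct construction is: take $\beta = \alpha_{\Acl} \in \RC^{\geq 0}$ and $\alpha$ to be any right-computable number with $\alpha > \beta$ such that $\alpha - \beta$ is not right-computable — e.g.\ use the complement trick $\alpha_{\Acl} + \alpha_{\N \setminus \Acl} = \sum_{i \ge 1} 2^{-i} = 1$, so with $\alpha = 1 \in \RC^{\geq 0}$ and $\beta = \alpha_{\Acl} \in \RC^{\geq 0}$ we get $\alpha - \beta = \alpha_{\N \setminus \Acl}$, which is right-computable iff $\N \setminus \Acl$ is r.e., i.e.\ (combined with $\Acl$ r.e.) iff $\Acl$ is computable — false for $\Acl$ the halting set. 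This gives $\alpha - \beta > 0$ with $\alpha - \beta \notin \RC^{\geq 0}$, completing the proof.
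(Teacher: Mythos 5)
Your proof is correct and follows essentially the same route as the paper: computable numbers are right-computable by definition, algebraic numbers are computable via root isolation, closure under addition follows by dovetailing the two upper-cut enumerations, and for non-closure under subtraction you use the same witnesses $\alpha = 1$ and $\beta = \alpha_{\Acl}$ (the Specker number for the halting set), concluding that $1-\alpha_{\Acl}$ cannot be right-computable since an r.e.\ upper cut for it would yield an r.e.\ lower cut for $\alpha_{\Acl}$, making $\alpha_{\Acl}$ computable. The visible false start in the subtraction argument (first setting $\beta = 1 - \alpha_{\Acl}$ and trying to show that $\alpha_{\Acl}$ itself fails to be right-computable, then self-correcting mid-paragraph) should be cut from a final write-up, but the construction and justification you land on agree with the paper's.
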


\begin{proof}
From the definition is it clear that every computable number is right-computable. It is well-known that every algebraic number is computable by employing any type of approximation algorithm to the roots of polynomials.
Example \ref{exmp:right computable numbers} provides a right-computable number $0 < \alpha < 1$ that is not computable.
As $\alpha$ is recursively enumerable, the set
$\{ q \in \Q \mid \alpha < q \}$ is recursively enumerable. 
Suppose that $1-\alpha$ is right-computable. Then also the set $\{ q \in \Q \mid \alpha < q \}$ is recursively enumerable and thus we may decide if a rational number $q \in \Q$ satisfies $\alpha < q$ or not.
Then $\alpha$ would be computable, which is a contradiction. Thus $\RC^{\geq 0}$ is not closed under subtraction. However, it is easy to see that $\RC^{\geq 0}$ is indeed closed under addition.
\end{proof}

For the proof of Proposition \ref{proposition:approx scl groups} we will use the following equivalent characterisation of right-computable numbers:

\begin{lemma} \label{lemma:characterisation of right-computable}
A number $\alpha \in \R^{\geq 0}$ is right-computable if and only if there is a Turing machine~$T$ with input set $\N$ and output set $\N \times \N$ (appropriately encoded), which for every $i \in \N$ returns a pair $T(i) = (m_i,n_i)$ of natural numbers and which satisfies that for all $i \in \N$, $n_i < n_{i+1}$ and
$$
\frac{m_i}{n_i} \geq \frac{m_{i+1}}{n_{i+1}} 
$$
such that
$$
\alpha = \lim_{i \to \N} \frac{m_i}{n_i}.
$$
\end{lemma}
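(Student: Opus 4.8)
The plan is to prove the two directions of the equivalence separately, translating between "recursively enumerable set of rationals exceeding $\alpha$" and "monotone rational approximating sequence from above produced by a Turing machine."

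\textbf{The easy direction ($\Leftarrow$).} Suppose such a machine $T$ exists. Since $(m_i/n_i)$ is non-increasing and converges to $\alpha$, we have $\alpha \leq m_i/n_i$ for every $i$, and for any rational $q > \alpha$ there is some $i$ with $m_i/n_i < q$ (because the limit is $\alpha < q$). Therefore $\{ q \in \Q \mid \alpha < q \} = \bigcup_{i \in \N} \{ q \in \Q \mid m_i/n_i < q \}$, provided we are slightly careful about the boundary: a rational $q$ with $q = m_i/n_i$ for some $i$ but $q > \alpha$ still gets enumerated once some later term drops strictly below it, which happens unless the sequence is eventually constant equal to $q = \alpha$, and in that degenerate case $q = \alpha$ is not in the set anyway. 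I would enumerate this union by a standard dovetailing argument: run $T$ on inputs $0, 1, 2, \dots$, and for each computed pair $(m_i, n_i)$ enumerate all rationals strictly greater than $m_i/n_i$ (these are recursively enumerable uniformly in $i$). Hence $\{q \in \Q \mid \alpha < q\}$ is recursively enumerable, so $\alpha$ is right-computable.

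\textbf{The hard direction ($\Rightarrow$).} Suppose $\alpha \geq 0$ is right-computable, so there is a machine $E$ enumerating $R := \{ q \in \Q \mid \alpha < q \}$, say as a sequence $q_0, q_1, q_2, \dots$ (with repetitions allowed). The naive idea is to let $T(i)$ output the minimum of $q_0, \dots, q_i$; this is non-increasing and converges to $\inf R = \alpha$. The genuine obstacle is the extra requirement $n_i < n_{i+1}$ on the \emph{denominators}: running minima of a list of rationals need not have strictly increasing denominators, and in fact may be eventually constant. I would handle this by perturbing and re-expressing the fractions. Concretely, define $r_i := \min(q_0, \dots, q_i) + 2^{-i}$ (or any explicit null sequence of positive rationals added on); then $(r_i)$ still converges to $\alpha$ and can be arranged to be strictly decreasing by also taking running minima of the $r_i$ themselves. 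Finally, to force strictly increasing denominators, write each $r_i$ in lowest terms as $a_i/b_i$ and then re-expand it as $m_i/n_i$ with $n_i := b_i \cdot (i!+1)$ or, more simply, choose $n_i$ inductively to be any common multiple of $b_i$ that exceeds $n_{i-1}$, and set $m_i := a_i n_i / b_i$; this changes neither the value $m_i/n_i = r_i$ nor the monotonicity, and all of these operations (finding lowest terms, choosing the next denominator, multiplying out) are computable, so the resulting map $i \mapsto (m_i, n_i)$ is computed by a Turing machine $T$ with the required properties.

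The main subtlety to get right is the interaction between the strict monotonicity of the \emph{values} $m_i/n_i$ and the strict monotonicity of the \emph{denominators} $n_i$ — these are pulling in slightly different directions, and one must check that the denominator-inflation step does not accidentally destroy the strict decrease of the values (it does not, since it is value-preserving) and that the added null sequence $2^{-i}$ does not destroy the limit (it does not, since it tends to $0$). One should also double-check the edge case $\alpha = 0$, where $R = \Q^{>0}$: the construction still works, producing positive fractions $m_i/n_i \to 0$. I expect writing out the bookkeeping for "choose the next denominator to be a multiple of $b_i$ exceeding $n_{i-1}$" to be the only mildly fiddly part, and it is routine.
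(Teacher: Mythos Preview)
Your proposal is correct and follows essentially the same route as the paper: enumerate the rationals above $\alpha$ by dovetailing, take running minima to get a non-increasing sequence converging to $\alpha$, and then inflate denominators to force $n_i < n_{i+1}$. Two small remarks: the paper only writes out the $\Rightarrow$ direction (your $\Leftarrow$ argument is fine and fills that gap), and your perturbation by $2^{-i}$ is unnecessary, since the lemma only asks for $\tfrac{m_i}{n_i} \geq \tfrac{m_{i+1}}{n_{i+1}}$, not strict inequality --- the bare running minimum already suffices before the denominator-scaling step.
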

Observe that the limit in the last lemma exists as the sequence $(\frac{m_i}{n_i})_{i \in \N}$ is monotone, bounded, and positive.

\begin{proof}
Let $T$ be the turing machine which halts if $q \in \Q$ satisfies that $\alpha < q$. We may compute a list all elements in $\Q$, and let $T$ run on each of them. Thus, 
we may compute a sequence $(q_i)_{i \in \N}$ of all elements $\{ q \in \Q \mid  \alpha < q \}$ and we may restrict this sequence to a strictly decreasing sequence $\tilde{q}_i = \min \{ q_j \mid j = 1, \ldots, i \}$ whose limit is $\alpha$. We may assume that $\tilde{q}_i = \frac{m_i}{n_i}$ and $n_i < n_{i+1}$ by possibly scaling $m_i$ and $n_i$.
\end{proof}

\section{Van Kampen Diagrams on Surfaces} \label{sec:van kampen on surfaces}

We introduce van Kampen Diagrams on surfaces, which we will use to encode the admissible maps $(f, \Sigma)$ of Proposition~\ref{prop:scl via admissible maps}. We will estimate the Euler characteristic of such surfaces by defining a combinatorial curvature $\kappa(P)$ for the disks $P$ of a van Kampen diagram in Section~\ref{subsec:comb gauss bonnet}. We will estimate $\kappa(P)$ via the \emph{branch vertices} of $P$, introduced in Section \ref{subsec:branch vertices}.

\subsection{Admissible Surfaces via Van Kampen Diagrams}
Van Kampen Diagrams on surfaces have been introduced by Olshanskii to study homomorphisms from surface groups to a group with a fixed presentation \cite{OL}. See also \cite{CSS}.
Let $\Sigma$ be a compact surface with boundary $\partial \Sigma$ and let $\langle \Scl \mid \Rcl \rangle$ be a presentation of a group $G$. A \emph{van Kampen Diagram~$\Pcl$ on~$\Sigma$ over the presentation of $\langle S \mid \Rcl \rangle$} is a decomposition of~$\Sigma$ into finitely many polygons $(P_i)_{i \in I}$ called \emph{disks} where the edges are labelled by words in $\Scl$ such that the boundary of each polygon is labelled counterclockwise by a reduced element of $\Rcl^\pm$ and such that the edges of adjacent polygons are compatible, i.e.\ if an edge is adjacent to two polygons $P$ and $P'$ then the label of one edge is $w \in F(S)$ and the other one is $w^{-1}$.

Every van Kampen diagram on a surface yields a continuous map $f \col \Sigma \to X$ to the presentation complex $X$ of $G = \langle \Scl \mid \Rcl \rangle$ by mapping the labelled edges to the edges in the $1$-skeleton of $X$ and mapping the polygons to the corresponding $2$-cells of $X$.

Van Kampen diagrams may be used to estimate $\scl$ of elements in $G$. 

\begin{prop} \label{prop:admissible maps and van Kampen diagrams}
Let $G$ be a group with presentation $\langle \Scl \mid \Rcl \rangle$ and assume that every element of~$\Rcl$ is cyclically reduced. Let $X$ be the associated presentation complex and let $\ttt \in \Scl$ be a letter represented by a loop $\gamma \col S^1 \to X$.
Let $f \col \Sigma \to X$ be a positive admissible surface to $\gamma$.

Then there is a van Kampen diagram $(\tilde{f}, \tilde{\Sigma})$ on a surface $\tilde{\Sigma}$ such that the boundaries are labelled by positive powers of $\ttt$, such that $(\tilde{f},\tilde{\Sigma})$ has the same degree as $(f,\Sigma)$ and such that
$$
\chi^-(\Sigma) \leq \chi^-(\tilde{\Sigma}).
$$
\end{prop}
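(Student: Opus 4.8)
The plan is to start with the positive admissible surface $f \colon \Sigma \to X$ and replace it by a homotopic map that is transverse to the $2$-cells and to the midpoints of the $1$-cells of $X$, and then read off a combinatorial structure on a new surface $\tilde\Sigma$. First I would perturb $f$ (up to homotopy rel boundary) so that the preimage of the barycenter of each $2$-cell is a finite set of points and the preimage of the midpoint of each $1$-edge of $X$ is a properly embedded $1$-manifold in $\Sigma$, i.e.\ a disjoint union of circles and arcs with endpoints on $\partial\Sigma$. The preimages of the $2$-cells then decompose $\Sigma$ into regions; each such region containing a barycenter preimage maps to a single $2$-cell and its boundary gets labeled (counterclockwise) by the corresponding relator of $\Rcl^{\pm}$, which is cyclically reduced by hypothesis. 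The only subtlety is the regions mapping into the $1$-skeleton: these are "collars" carrying no relator, and I would collapse them. Concretely, one cuts $\Sigma$ along the midpoint-preimage $1$-manifold, discards the components mapping entirely into the $1$-skeleton, and reglues the resulting pieces along matching boundary labels; the compatibility condition for a van Kampen diagram (an edge shared by two disks carries inverse labels) is exactly what this regluing produces. Since $\ttt \in \Scl$ is a single letter represented by $\gamma$, after collapsing we may arrange that each boundary component of $\tilde\Sigma$ is labeled by a positive power of $\ttt$; positivity of the degree on each component guarantees we never get a negative power or a cancellation.

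The second point to check is that the degree is preserved: collapsing collar regions and regluing does not change the homology class $(\partial f)_*[\partial\tilde\Sigma]$ in $H_1(S^1;\Z)$, because the boundary word is unchanged up to free reduction and positivity prevents backtracking from being created. The third and most delicate point is the Euler characteristic inequality $\chi^-(\Sigma)\le\chi^-(\tilde\Sigma)$. Cutting $\Sigma$ along an embedded $1$-manifold and discarding pieces only increases $\chi$ of each surviving component; regluing along circles does not decrease $\chi$; and the passage from $\chi$ to $\chi^-$ only helps, since $\chi^-$ ignores the (non-negative) Euler characteristic of disk and sphere components, and such components — which could be created by the surgery — are precisely the ones that contribute nothing to $\chi^-$. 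One must be a little careful that discarding a collar component does not disconnect $\tilde\Sigma$ in a way that creates spheres bounding nothing; but any closed component of $\tilde\Sigma$ produced this way is a genuine van Kampen diagram on a closed surface with empty boundary, hence contributes $0$ to the degree and a non-negative amount to $-\chi^-$ once we remove it, so we may assume $\tilde\Sigma$ has no closed components.

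The main obstacle I expect is the bookkeeping around the midpoint-preimage $1$-manifold: making the cut-and-reglue operation rigorous and verifying that the labels are genuinely compatible in the sense required for a van Kampen diagram, while simultaneously controlling what happens to $\chi$. In particular one has to rule out that the surgery turns a component of $\Sigma$ with $\chi<0$ into several components whose total $-\chi^-$ has \emph{decreased}; the key estimate is that each cut along a single arc or circle changes $\chi$ by a non-negative amount (arcs: $+1$ per arc after cutting; circles: $0$), and deleting a collar component of Euler characteristic $c$ changes the ambient $\chi$ by $-c$ with $c \ge 0$ for an annular collar. Assembling these local moves into the global inequality $\chi^-(\Sigma)\le\chi^-(\tilde\Sigma)$, and handling the edge cases where collapsing produces disks or spheres, is where the real work lies; everything else is a transversality argument in the style of Olshanskii \cite{OL} and Calegari \cite[Chapter 2]{Calegari}.
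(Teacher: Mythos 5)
Your overall strategy is the same as the paper's: replace $f$ by a map which is combinatorial (the paper uses simplicial approximation, you use transversality to barycenters/midpoints; this is a cosmetic difference), decompose $\Sigma$ along the preimage of the $1$-skeleton, delete the degenerate pieces mapping into the $1$-skeleton, and track $\chi^-$ through the moves. The monotonicity discussion for $\chi^-$ under cutting arcs/circles and discarding collars is fine in outline.

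However, there are two genuine gaps in what you've written, and both concern whether the resulting decomposition is actually a \emph{van Kampen diagram}. First, you assert that ``each such region containing a barycenter preimage maps to a single $2$-cell and its boundary gets labeled (counterclockwise) by the corresponding relator of $\Rcl^{\pm}$.'' This is not automatic. A single connected region of the preimage of a $2$-cell can contain several barycenter-preimage points (possibly with mixed local degrees), in which case its boundary word represents a power $r^{n_P}$ of the relator with $|n_P|\ne 1$, which is not an admissible polygon. The paper addresses exactly this: when $n_P=0$ the region is deleted, and when $|n_P|>1$ the region is split into $|n_P|$ disks each labelled by $r^{\pm 1}$ (this is the construction in Figure~\ref{fig:splitting}). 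Your argument gives no mechanism for this reduction and tacitly assumes every region has degree $\pm 1$. Second, a connected component of the preimage of a $2$-cell need not be a disk — it can have positive genus or several boundary circles. The paper handles this by ``possibly reducing the Euler characteristic of $\Sigma^{\smp}$'' so that every such component becomes a disk; this is a compression move and has to be recorded both because van Kampen polygons must be disks and because it affects $\chi$. Your sketch never ensures this. A smaller omission is the removal of backtracking in the boundary words of the polygons (the paper's Figure~\ref{fig:paste}); without it the disk labels need not be the cyclically reduced words $r^{\pm 1}$ that the subsequent small cancellation counting requires. These are not bookkeeping details that can be waved away — without them the output of the surgery is not a van Kampen diagram in the sense used in Section~\ref{subsec:comb gauss bonnet}, and Propositions~\ref{prop:combinatorial gauss bonnet} and \ref{prop:curvature via branch vertices} would not apply.
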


\begin{proof}
Let $f \col \Sigma \to X$ be an admissible map. We may assume that $\gamma$ maps the boundary to the edge in the $1$-skeleton of $X$ labelled by $\ttt$.

Both $X$ and $\Sigma$ admit the structure $X^{\smp}$ and $\Sigma^{\smp}$ of a simplicial complex. Let $\pt^{\smp} \in X^{\smp}$ be the corresponding base-point of $X$. 
Every element $\stt \in \Scl$ corresponds to a simplicial loop in $X$. Let $\stt^{\smp}$ be the corresponding loop in $X^{\smp}$  and denote by $\Scl^{\smp}$ the set of all such simplicial loops.
Moreover, let $\Rcl^{\smp}$ be the set  of $2$-subcomplexes $r^{\smp} \subset X^{\smp}$ of $X^{\smp}$ corresponding to a relation $r  \in \Rcl$.  
By simplicial approximation we may assume that $f$ is homotopic to a simplicial map $f^{\smp}$.
By possibly further subdividing $\Sigma^{\smp}$ we may assume that the preimage of every element in $\Scl^{\smp}$ consists of $1$-simplices without back-tracking.

By possibly subdividing $\Sigma^{\smp}$ and changing $f^{\smp}$ further we may assume that $(f^\smp )^{-1}(\pt)$ is a finite collection of points.
Let $\stt \in \Scl$ and consider the inverse
 $(f^{\smp})^{-1}(\stt^\smp_o)$ of the interior of $\stt^\smp_o$ of $\stt^{\smp}$. By possibly subdividing $\Sigma^\smp$ futher we may assume that  $(f^{\smp})^{-1}(\stt^\smp_o)$ is a set of $1$-simplices, connecting the points of $(f^\smp)^{-1}(\pt^\smp)$.
 As the boundary of $\Sigma^\smp$ maps to $\ttt^\smp$, we know that every edge in $(f^\smp)^{-1}(\stt^\smp_o)$ bounds two polygons on either side if $\stt \not = \ttt$. 
 
 Any edge of $(f^\smp)^{-1}(\ttt_o)$ that does not bound any polygon may be removed without affecting the total degree and by possibly increasing the Euler characteristic. Thus we may assume that every edge of $(f^\smp)^{-1}(\ttt^{\smp})$ is adjacent to either one or two polygons.

Let $r \in \Rcl$ be a relation and let $r^{\smp} \in \Rcl^{\smp}$ be the corresponding subcomplex of $X^{\smp}$.
The preimage $\Pcl_r$ of the interior of $r^{\smp}$ under $f^{\smp}$ is a disjoint union of open subcomplexes of $\Sigma^{\smp}$ whose boundary is the preimage of $\Scl^{\smp}$. 
By possibly reducing the Euler characteristic of $\Sigma^{\smp}$ we may assume that every component of $\Pcl_r$ is homeomorphic to a disk.
Let $P \in \Pcl_r$ be such a disk component. Its boundary under $f^{\smp}$ maps to the loops in $\Scl^{\smp}$ and thus defines some conjugacy class in $F(\Scl)$, which is represented by a power $n_P \in \Z$ of $r$.
If $n_P = 0$, we may remove~$P$, possibly by subdividing $\Sigma^{\smp}$ further.

If $|n_P| > 1$ we may replace $P$ by $|n_P|$ many disks $P_1, \ldots, P_{|n_P|}$, which are labelled by a power of $r$ of the same sign of $n_P$; see Figure \ref{fig:splitting}.

\begin{figure} 
\begin{center}
\includegraphics[scale=0.7]{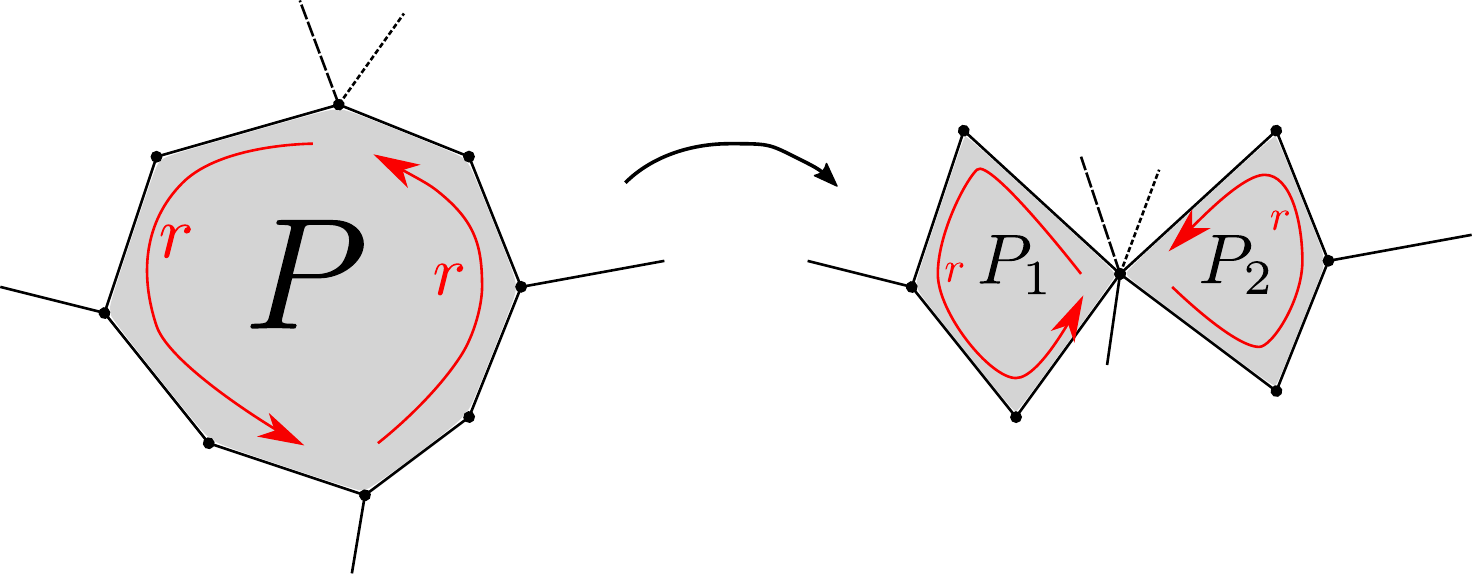}
\caption{$P$ has the boundary word $r^2$. Replace $P$ by two disk components $P_1$ and $P_2$ which both have the boundary word $r$.} \label{fig:splitting}
\end{center}
\end{figure}

Hence, we may assume that for every $r \in \Rcl$, the preimage $\Pcl_r$ of the interior of $r^{\smp}$ is labelled by a word which is either conjugate to $r$ or $r^{-1}$. Moreover, since $\Sigma$ is compact, there are just finitely many $r \in \Rcl$ such that $\Pcl_r$ is non-empty.
We may assume that there is no backtracking on the paths of the boundary of~$P$ by glueing together two paths with backtracking; see Figure~\ref{fig:paste}.

\begin{figure} 
\begin{center}
\includegraphics[scale=0.7]{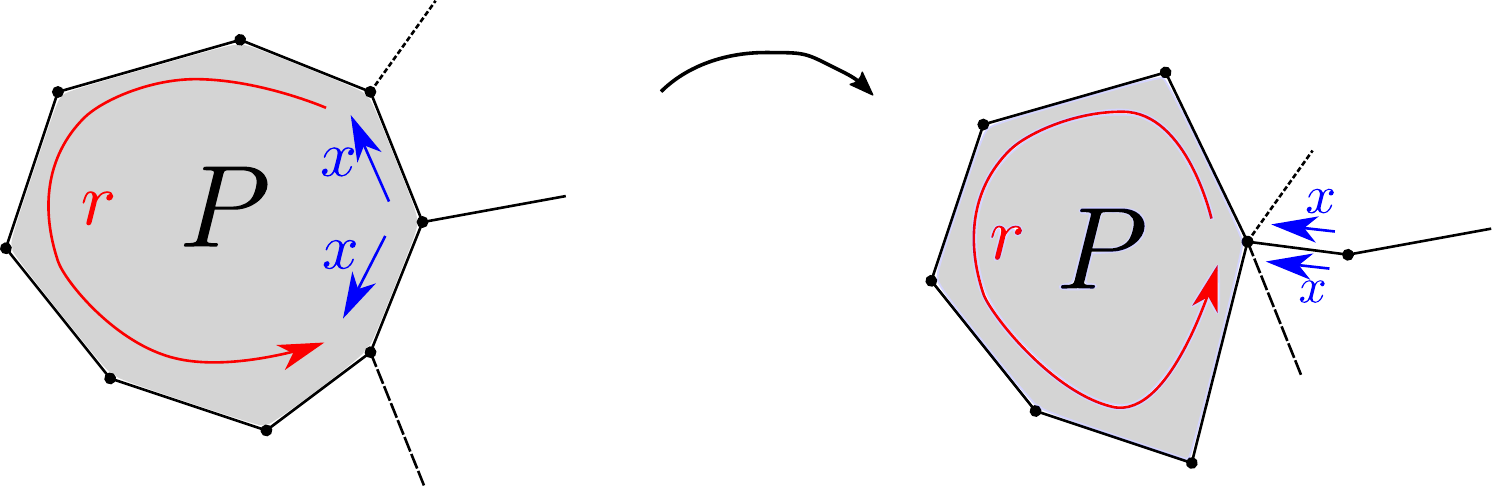}
\caption{If the boundary word of~$P$ has backtracking we may glue up the backtracking and replace it by a disk with shorter boundary word.} \label{fig:paste}
\end{center}
\end{figure}

We conclude that every $P$ is a polygon labelled by a positive or a negative power of $r \in \Rcl$ without backtracking.
\end{proof}

\subsection{Combinatorial Gauss-Bonnet} \label{subsec:comb gauss bonnet}
For a van Kampen diagram, let $\Pcl = \{ P_i \}_{i \in I}$ be the set of labelled polygons and let $P \in \Pcl$. 
For any vertex $v$ of $P$ denote by $\deg(v)$ be the degree of~$v$ i.e.\ the number of edges adjacent to $v$ and let $\deg'(v)$ be the number of disks adjacent to $v$.  
Observe that $2 \leq \deg'(v) \leq \deg(v)$ and that by concatenating edges we may always assume that~$\deg(v) \geq 3$.

Let $V_P$ be the set of all vertices and $E_P$ be  the set of all edges of $P$.
For every edge $e \in E_P$ let $\epsilon(e) \in \{ 1, 2 \}$ be the number of disks adjacent to it.

Define the \emph{curvature} $\kappa \col \Pcl \to \Q$ of disks via
$$
\kappa(P) := \left( \sum_{v \in V_P} \frac{1}{\deg'(v)} \right) - \left( \sum_{e \in E_P} \frac{1}{\epsilon(e)} \right) + 1.
$$

\begin{prop}[Combinatorial Gauss-Bonnet] \label{prop:combinatorial gauss bonnet}
Let $\Pcl$ be a van Kampen diagram on a surface~$\Sigma_\Pcl$ and let $\kappa \col \Pcl \to \Q$ be as above. Then
$$
\chi(\Sigma_\Pcl) = \sum_{P \in \Pcl} \kappa(P).
$$
\end{prop}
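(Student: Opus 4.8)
The plan is to prove this as a purely combinatorial identity by summing up the defining formula for $\kappa(P)$ over all disks $P \in \Pcl$ and comparing with the standard computation of $\chi(\Sigma_\Pcl)$ from the cell structure of the van Kampen diagram. Recall that a van Kampen diagram on $\Sigma_\Pcl$ is a decomposition of $\Sigma_\Pcl$ into polygons, so it gives $\Sigma_\Pcl$ the structure of a CW complex with vertex set $V = \bigcup_{P} V_P$ (as a set of vertices of $\Sigma_\Pcl$, counted once each), edge set $E$, and $2$-cells exactly the disks of $\Pcl$. Hence $\chi(\Sigma_\Pcl) = |V| - |E| + |\Pcl|$, and the task reduces to showing that $\sum_{P \in \Pcl} \kappa(P)$ equals this alternating sum.

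First I would expand:
\begin{equation*}
\sum_{P \in \Pcl} \kappa(P) = \sum_{P \in \Pcl} \sum_{v \in V_P} \frac{1}{\deg'(v)} \;-\; \sum_{P \in \Pcl} \sum_{e \in E_P} \frac{1}{\epsilon(e)} \;+\; \sum_{P \in \Pcl} 1.
\end{equation*}
The last term is obviously $|\Pcl|$, the number of $2$-cells. For the middle term, I would switch the order of summation: for a fixed edge $e$ of $\Sigma_\Pcl$, the number of disks $P$ with $e \in E_P$ is precisely $\epsilon(e) \in \{1,2\}$ (one if $e$ lies on the boundary $\partial\Sigma_\Pcl$, two if it is interior), so $\sum_{P} \sum_{e \in E_P} \frac{1}{\epsilon(e)} = \sum_{e \in E} \epsilon(e) \cdot \frac{1}{\epsilon(e)} = |E|$. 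Similarly, for the first term, for a fixed vertex $v$ of $\Sigma_\Pcl$ the number of disks $P$ with $v \in V_P$ is exactly $\deg'(v)$ by definition, so $\sum_{P}\sum_{v \in V_P} \frac{1}{\deg'(v)} = \sum_{v \in V} \deg'(v) \cdot \frac{1}{\deg'(v)} = |V|$. Combining the three pieces gives $\sum_{P} \kappa(P) = |V| - |E| + |\Pcl| = \chi(\Sigma_\Pcl)$, as desired.

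The one point that needs genuine care — and which I expect to be the main (minor) obstacle — is the bookkeeping of the double counting: specifically that when a disk $P$ is glued to itself along part of its boundary, or when the same abstract vertex or edge of $\Sigma_\Pcl$ appears multiple times on $\partial P$, the sums $\sum_{v \in V_P}$ and $\sum_{e \in E_P}$ should be read as running over the boundary cells of the polygon $P$ \emph{with multiplicity}, i.e. over corners and sides of the polygon, while $\deg'(v)$ and $\epsilon(e)$ count incidences in $\Sigma_\Pcl$ (again with multiplicity). With this convention the interchange-of-summation identities $\sum_P \#\{v \in V_P : v \mapsto \bar v\} = \deg'(\bar v)$ and $\sum_P \#\{e \in E_P : e \mapsto \bar e\} = \epsilon(\bar e)$ hold tautologically, and the computation goes through verbatim. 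I would also remark that the reduction $\deg(v) \ge 3$ and the concatenation of edges noted before the proposition do not affect the identity — they are only used later for the curvature estimates — so no appeal to them is needed here. This establishes the proposition.
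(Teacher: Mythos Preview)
Your proof is correct and follows essentially the same approach as the paper: both expand $\sum_P \kappa(P)$, interchange the order of summation in the vertex and edge sums, and observe that each vertex (resp.\ edge) of $\Sigma_\Pcl$ is counted exactly $\deg'(v)$ (resp.\ $\epsilon(e)$) times, yielding $|V|-|E|+|\Pcl|=\chi(\Sigma_\Pcl)$. Your additional remarks on multiplicities when a disk is self-glued are a helpful clarification but do not change the argument.
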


\begin{proof}
Every vertex in $\Sigma_\Pcl$ is adjacent to $\deg'(v)$ many disks. Thus $\sum_{P \in \Pcl} \sum_{v \in V_P} \frac{1}{\deg'(v)}$ is the total number of vertices in $\Sigma_\Pcl$. Similarly, $\sum_{P \in \Pcl} \sum_{e \in E_P} \frac{1}{\epsilon(v)}$ is the total number of edges (as every edge is the boundary of some disk) and $\sum_{P \in \Pcl} 1$ is the total number of faces.
Hence
$$
\sum_{P \in \Pcl} \kappa(P) = \# \{ \mbox{vertices} \} - \# \{ \mbox{edges} \} + \# \{ \mbox{faces} \} = \chi(\Sigma). \qedhere
$$
\end{proof}

\subsection{Curvature via Branch Vertices} \label{subsec:branch vertices}
We will estimate the curvature of van Kampen diagrams via the number of \emph{branch vertices} of a disk.
Let $P \in \Pcl$ be a disk with vertices $V_P$ and edges $E_P$.
For $v \in V_P$ set
$$
\beta(v) =
\begin{cases}
1 & \mbox{if } \deg(v) \geq 3 \\
0 & \mbox{else.}
\end{cases}
$$
We call a vertex $v$ such that $\beta(v) = 1$ a \emph{branch vertex.}

\begin{defn}[Branch Vertices $\beta$] \label{defn:beta}
Let $P \in \Pcl$ be a disk in a van Kampen diagram and let $p \subset \partial P$ be a connected subpath in the boundary of $P$ with vertices $v_0, \ldots, v_n$.
Then we set
$$
\beta(P) := \sum_{v \in V_P} \beta(v)
$$
and
$$
\beta(p) = \frac{1}{2} \left( \beta(v_0) + \beta(v_n) \right) + \sum_{i=1}^{n-1} \beta(v_i).
$$
\end{defn}

The following observation will be useful:
\begin{prop} \label{prop:useful property beta}
Let $\Pcl$ be a van Kampen diagram on a surface $\Sigma$, let $P \in \Pcl$ be a disk and let $p,q$ bet two subpath of $\partial P$ such that $q$ starts at the end of $q$. Then
$$
\beta(p \cdot q) = \beta(p) + \beta(q).
$$
where we denote by $p \cdot q$ the concatenation of boths paths.
If $\partial P = p_1 \cdots p_n$ is a decomposition of the boundary of $P$ into subpaths, then
$$
\beta(P) = \sum_{i=1}^n \beta(p_i).
$$
\end{prop}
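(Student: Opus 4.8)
The plan is to prove Proposition \ref{prop:useful property beta} as a purely formal computation from Definition \ref{defn:beta}, with the only genuine content being careful bookkeeping of the ``half-weights'' assigned to endpoints of a subpath. First I would spell out the vertices: write $p$ as the subpath with consecutive vertices $v_0, v_1, \ldots, v_n$ and $q$ as the subpath with consecutive vertices $w_0, w_1, \ldots, w_m$, where the hypothesis that $q$ starts at the end of $p$ means $w_0 = v_n$. Then $p \cdot q$ has vertices $v_0, \ldots, v_n = w_0, w_1, \ldots, w_m$, so its interior vertices are $v_1, \ldots, v_{n-1}, v_n, w_1, \ldots, w_{m-1}$ and its two endpoints are $v_0$ and $w_m$.

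Next I would just add up the three expressions using the formula $\beta(p) = \tfrac12(\beta(v_0) + \beta(v_n)) + \sum_{i=1}^{n-1}\beta(v_i)$. Computing $\beta(p) + \beta(q)$, the vertex $v_n = w_0$ contributes $\tfrac12\beta(v_n)$ from the $p$-term and $\tfrac12\beta(w_0) = \tfrac12\beta(v_n)$ from the $q$-term, which sum to the full weight $\beta(v_n)$ — exactly what that vertex should contribute as an interior vertex of $p\cdot q$. The endpoints $v_0$ and $w_m$ retain their weight $\tfrac12$ in both $\beta(p)+\beta(q)$ and $\beta(p\cdot q)$, and every other vertex is interior in both pictures and contributes weight $1$. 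Hence $\beta(p\cdot q) = \beta(p) + \beta(q)$.

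For the second assertion, I would first argue the degenerate point about the closed boundary path $\partial P$: when one writes $\partial P = p_1 \cdots p_n$ as a cyclic concatenation, the endpoint of $p_n$ is glued to the starting point of $p_1$, so iterating the additivity identity around the cycle makes each of the $n$ junction vertices pick up two half-weights summing to a full weight, and no vertex is ever left with only a half-weight. Thus $\sum_{i=1}^n \beta(p_i)$ counts each branch vertex of $\partial P$ exactly once, which is precisely $\beta(P) = \sum_{v \in V_P}\beta(v)$ since every vertex of the disk $P$ lies on its boundary. Formally this follows by induction on $n$ using the first part, treating the cyclic closure as the base case $n=1$ (where $p_1 = \partial P$ is a closed path and one checks directly that $\beta$ of a closed path equals $\beta$ of the set of its vertices).

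I do not expect any real obstacle here; the statement is essentially a definitional consistency check. The one place to be slightly careful is the cyclic (closed-path) case, since Definition \ref{defn:beta} as written assigns half-weights to $v_0$ and $v_n$ of a path, and for a closed path $v_0 = v_n$ so one must make sure those two half-weights are understood to combine into a single full weight rather than being double-counted — I would state this explicitly as the meaning of $\beta$ on a closed path, and then everything goes through. There is also a harmless typo in the hypothesis (``$q$ starts at the end of $q$'' should read ``$q$ starts at the end of $p$''); the proof uses the evidently intended reading.
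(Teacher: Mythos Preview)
Your proposal is correct and matches the paper's approach: the paper's proof consists of the single sentence ``Immediate from the definition of $\beta$,'' and your write-up is exactly the bookkeeping that makes this immediacy explicit. Your careful handling of the half-weights at the junction vertex and of the cyclic closure is precisely what is being taken for granted.
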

\begin{proof}
Immediate from the definition of $\beta$.
\end{proof}

We may use $\beta(P)$ to estimate the curvature of $P$.

\begin{prop} \label{prop:curvature via branch vertices}
Let $\Pcl$ be a van Kampen diagram on a surface $\Sigma$ and let $P \in \Pcl$ be a disk with vertices $V_P$. Then
$$
-\kappa(P) \geq \frac{ \beta(P) - 6 }{6} 
$$
\end{prop}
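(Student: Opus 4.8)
The plan is to estimate $\kappa(P)$ directly from its definition
$$
\kappa(P) = \left( \sum_{v \in V_P} \frac{1}{\deg'(v)} \right) - \left( \sum_{e \in E_P} \frac{1}{\epsilon(e)} \right) + 1
$$
by bounding each of the three terms in terms of $\beta(P)$ and the number of vertices and edges of $P$. Let $n = |V_P| = |E_P|$ (these are equal since $P$ is a polygon, so its boundary is a cycle). First I would deal with the vertex sum: since $\deg'(v) \geq 2$ always, each term $\frac{1}{\deg'(v)} \leq \frac12$, so $\sum_{v \in V_P} \frac{1}{\deg'(v)} \leq \frac{n}{2}$. For the edge sum, since $\epsilon(e) \leq 2$, each term $\frac{1}{\epsilon(e)} \geq \frac12$, so $-\sum_{e \in E_P} \frac{1}{\epsilon(e)} \leq -\frac{n}{2}$. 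Combining these crude bounds gives only $\kappa(P) \leq 1$, which is not enough — I need to extract a factor proportional to $\beta(P)$, so the bounds on the vertex terms must be sharpened at branch vertices.

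The key refinement is that at a branch vertex $v$ we have $\deg(v) \geq 3$, hence $\deg'(v) \geq 3$ (since we arranged $\deg(v) \geq 3$ for all vertices by concatenating edges, actually every vertex is a branch vertex in that normalization — so I should be careful here: the relevant inequality is $2 \le \deg'(v) \le \deg(v)$, and $\beta(v) = 1$ exactly when $\deg(v) \ge 3$). At such a vertex $\frac{1}{\deg'(v)} \leq \frac13$ whenever additionally $\deg'(v) \ge 3$; but $\deg'(v)$ could still be $2$ even if $\deg(v) \ge 3$. The correct move is to write $\frac{1}{\deg'(v)} \le \frac{1}{2} - \frac{1}{6}\beta(v)$: if $\beta(v)=0$ this is just $\frac12 \ge \frac{1}{\deg'(v)}$, and if $\beta(v)=1$ we need $\frac{1}{\deg'(v)} \le \frac13$, i.e.\ $\deg'(v) \ge 3$. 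The latter may fail, so instead I expect the paper accounts for the "missing" curvature through the edge count: a vertex with $\deg(v)\ge 3$ forces extra edges incident to $P$ or to neighbouring disks. I would therefore set up a counting argument relating $\sum_v \beta(v)$, the total number of edges, and the disk degrees, using that $\sum_{v \in V_P}\deg'(v) = \sum_{e\in E_P}\epsilon(e)$ and the handshake-type identity for $P$ inside $\Sigma$.

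Concretely, I would proceed as follows. Write $S_V = \sum_{v\in V_P}\frac{1}{\deg'(v)}$ and $S_E = \sum_{e\in E_P}\frac{1}{\epsilon(e)}$, so $\kappa(P) = S_V - S_E + 1$, hence $-\kappa(P) = S_E - S_V - 1$. I want to show $S_E - S_V - 1 \ge \frac{\beta(P)-6}{6}$, i.e.\ $S_E - S_V \ge \frac{\beta(P)}{6}$. Bound $S_V \le \frac{n}{2} - \frac{\beta(P)}{6} + (\text{correction for branch vertices with }\deg'=2)$ and bound $S_E \ge \frac n2$ minus a correction depending on how many boundary edges of $P$ are adjacent to only one disk; then show the corrections cancel appropriately, or bound them using that $P$ is a disk in a closed-up surface (so edges of $P$ adjacent to a single disk occur in controlled patterns). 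The hard part will be handling vertices $v$ on $\partial P$ with $\deg(v)\ge 3$ but $\deg'(v)=2$: these genuinely do not help the vertex sum, so the inequality must recover $\tfrac16$ of curvature per such vertex from somewhere else — presumably from the two edges at $v$ being "split" so that neighbouring disks contribute, or from a direct accounting of $E_P$ versus $V_P$ that is not simply $|E_P|=|V_P|$ once one works in $\Sigma$ rather than in the abstract polygon. I would resolve this by carefully distinguishing edges of $P$ from edges of $\Sigma$ lying on $\partial P$, since concatenation of edges in $\Sigma$ changes which vertices count as branch vertices of $P$; getting the bookkeeping right here is the crux, and the constant $6$ and the $-6$ shift should fall out of it.
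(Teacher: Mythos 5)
Your computational skeleton is the intended one, and it gets further than the paper's own one-line ``Immediate from Proposition~\ref{prop:combinatorial gauss bonnet},'' which in fact proves nothing local. Writing $-\kappa(P) = -1 + \sum_{e\in E_P}\frac{1}{\epsilon(e)} - \sum_{v\in V_P}\frac{1}{\deg'(v)}$, using $\frac{1}{\epsilon(e)}\ge\frac12$, $\frac{1}{\deg'(v)}\le\frac12$, the sharpening $\frac{1}{\deg'(v)}\le\frac13$ at branch vertices, and $|V_P|=|E_P|$ immediately gives $-\kappa(P)\ge -1+\frac{\beta(P)}{6}$. That is the whole argument, and it is the argument the paper is gesturing at.

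The genuine issue, which you identify correctly, is that the sharpening $\frac{1}{\deg'(v)}\le\frac13$ needs $\deg'(v)\ge 3$, whereas $\beta(v)=1$ only records $\deg(v)\ge 3$, and in general $\deg'(v)$ can be strictly less than $\deg(v)$ (a disk may have two corners at $v$). Where you go wrong is in the proposed fix. There is no extra negative curvature to be harvested from the edge sum: $\epsilon(e)\in\{1,2\}$, so $\frac{1}{\epsilon(e)}\ge\frac12$ is already tight when $\epsilon(e)=2$, and a branch vertex with $\deg'(v)=2$ does not force any edge at $v$ to have $\epsilon(e)=1$. Nor does ``distinguishing edges of $P$ from edges of $\Sigma$ lying on $\partial P$'' help, since the formula for $\kappa$ already refers to the edges of the van Kampen complex, and $|V_P|=|E_P|$ holds for the abstract polygon regardless. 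In other words, the second half of your proposal is a dead end: if a branch vertex truly has $\deg'(v)=2$, that vertex contributes zero curvature, not $\frac16$, and the inequality as stated fails for such diagrams.

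What actually saves the proposition is that in the only place it is used (Claim~\ref{claim:positive polygon, mu and nu}), the diagrams are reduced $C'(1/6)$ diagrams and the disks in question are labelled by cyclically reduced relators, so no disk has two corners at the same vertex and $\deg'(v)=\deg(v)$ for the vertices that matter; equivalently, the paper is tacitly using $\deg$ rather than $\deg'$ in the curvature formula. Your instinct to worry about the discrepancy is sound, and the paper does not address it, but the correct resolution is to note that $\deg(v)\ge3\Rightarrow\deg'(v)\ge3$ holds in the relevant setting (or to replace $\deg'$ by $\deg$ in Definition of $\kappa$, at the cost of reinterpreting Proposition~\ref{prop:combinatorial gauss bonnet} in terms of corners), not to hunt for extra terms in the edge count.
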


\begin{proof}
Immediate from Proposition \ref{prop:combinatorial gauss bonnet}.
\end{proof}

\section{Proof of Proposition \ref{proposition:approx scl groups}}
\label{sec:proof of Prop}

We now define the groups $\Gcl((m_i)_{i \in \N}, (n_i)_{i \in \N})$ of Proposition \ref{proposition:approx scl groups}. 

\begin{defn}[Groups $\Gcl((m_i)_{i \in \N}, (n_i)_{i \in \N})$] \label{defn:groups constructed}
Set $\Scl = \{ \ttt, \att, \btt, \ctt, \stt_1, \stt_2, \stt_3, \stt_4, \stt_5, \stt_6, \stt_7, \stt_8, \stt_9 \}$ and for integers $n,m,N \in \N$ define $r_{n,m,N} \in F(\Scl)$ as
$$
r_{m,n,N} = \ttt^n w_{N}^{2m} s_{N,m,n} 
$$
where
$$
w_N = \att^N \btt^N \ctt^N \att^{-N} \btt^{-N} \ctt^{-N}
$$ and 
$$
s_{N,m,n} = \stt_1^{l} \stt_2^l \stt_3^l \stt_1^{-l} \stt_2^{-l} \stt_3^{-l} \stt_4^{l} \stt_5^l \stt_6^l \stt_4^{-l} \stt_5^{-l} \stt_6^{-l} \stt_7^{l} \stt_8^l \stt_9^l \stt_7^{-l} \stt_8^{-l} \stt_9^{-l}
$$
with $l = 6 \cdot 5^N 7^m 11^n$.
For integer sequences $(m_i)_{i \in \N}$, $(n_i)_{i \in \N}$ define
$$
\Gcl((m_i)_{i \in \N}, (n_i)_{i \in \N}) = \langle \Scl \mid \{ r_{m_i,n_i,i}, i \in \N \} \rangle.
$$
\end{defn}

Note that $l$ is chosen in this way so that $r_{n,m,N}$ satisfy the small cancellation condition $C'(1/6)$ and so that every choice of $n,m,N$ yields a different $l$.

We may now prove Proposition \ref{proposition:approx scl groups}:

\begin{repproposition}{proposition:approx scl groups}
Let $(m_i)_{i \in \N}, (n_i)_{i \in \N}$ be two sequences of natural numbers such that $n_i < n_{i+1}$ and such that $\frac{m_i}{n_i} \geq \frac{m_{i+1}}{n_{i+1}}$ for every $i \in \N$. 
Then the group $\Gcl = \Gcl((m_i)_{i \in \N}, (n_i)_{i \in \N})$
satisfies the $C'(1/6)$ small cancellation condition. 
For $\ttt \in \Gcl$ we have that
$$
\scl_{\Gcl}(\ttt) = \lim_{i \to \infty} \frac{m_i}{n_i}.
$$
If $(m_i)_{i \in \N}$ and $(n_i)_{i \in \N}$ are recursively enumerable then $\Gcl$ is recursively presented.
\end{repproposition}

\begin{proof}
We will estimate $\scl_\Gcl(\ttt)$ from above and below. 
First, note that for every $i \in \N$, 
$$
t^{n_i} = s_N w_N^{-2m_i}.
$$ 
in $\Gcl$.
Moreover, we observe that $w_N$ is a commutator, as $w_N = [\att^N \btt^N, \ctt^N \att^{-N}]$, and that $s_N$ is the product of three commutators, as $s_N = [\stt^l_1 \stt^l_2, \stt^l_3 \stt_1^{-l}] [\stt^l_4 \stt^l_5, \stt^l_6 \stt_4^{-l}] [\stt^l_7 \stt^l_8, \stt^l_9 \stt_7^{-l}] $.
Using Proposition~\ref{prop:scl basic prop} $(ii), (iii)$ and $(iv)$ we see that
$$
\scl_\Gcl(\ttt^{n_i}) \leq 3 + m_i
$$
hence $\scl_\Gcl(\ttt) \leq \frac{m_i+3}{n_i}$ and hence 
$$
\scl_\Gcl(\ttt) \leq \lim_{i \to \infty} \frac{m_i}{n_i},
$$
as $n_i \to \infty$ for $i \to \infty$. 

For the other direction fix an $\epsilon > 0$. We know by Proposition \ref{prop:scl via admissible maps} that there is a positive admissible surface $f \col \Sigma \to X$ where $X$ is the presentation complex of $\Gcl$ with the presentation of Definition \ref{defn:groups constructed} such that
$$
\scl_\Gcl(\ttt) \geq \frac{- \chi(\Sigma)}{2 n(f,\Sigma)} - \epsilon.
$$
By Proposition \ref{prop:admissible maps and van Kampen diagrams} we may assume that there is a van Kampen diagram $\Pcl = (P_i)_{i \in I}$ on $\Sigma$ over the presentation of $\Gcl$ whose boundaries are positive powers of $\ttt$.

We say that a disk $P \in \Pcl$ is \emph{positive} if it is labelled by a word in $\{ r_{m_i,n_i,i}, i \in \N \}$ and \emph{negative} if it is labelled by a word in $\{ r_{m_i,n_i,i}^{-1}, i \in \N \}$.
The set of positive disks is denoted by $\Pcl^+$ and the set of negative disks by $\Pcl^-$.

If $P$ is positive and labelled by the word $r_{m_i,n_i,i}$, then we set $n(P) = n_i$ and $m(P) = m_i$. 
Similarly, if $P$ is negative and labelled by the word $r_{m_i,n_i,i}^{-1}$ then we set $n(P) = -n_i$ and $m(P) = m_i$.
We observe that $\sum_{P \in \Pcl} n(P) = n(f,\Sigma)$, the total degree of the map induced by the van Kampen diagram.
We also define $N(P) = i$.

Suppose that $P \in \Pcl^+$ is positive and labelled by $r_{m,n,N}$.
We decompose the boundary of $P$ as $\partial P = p^{(t)} \cdot p^{(w)} \cdot p^{(s)}$ where $p^{(t)}$ is labelled by $\ttt^{n}$, $p^{(w)}$ is labelled by $w_N^{2m}$, and $p^{(s)}$ is labelled by  $s_{N,m,n}$.
We can decompose the path $p^{(w)}$ further as $p^{(w)} = \prod_{i=1}^{2m} q^i_1 \cdot  q^i_2 \cdot q^i_3 \cdot q^i_4 \cdot q^i_5 \cdot q^i_6$ where
\begin{itemize}
\item  $q^i_1$ is labelled by $\att^N$
\item $q^i_2$ is labelled by $\btt^N$
\item $q^i_3$ is labelled by $\ctt^N$
\item $q^i_4$ is labelled by $\att^{-N}$
\item $q^i_5$ is labelled by $\btt^{-N}$
\item $q^i_6$ is labelled by $\ctt^{-N}$
\end{itemize}
Moreover, $u_0$ will denote the inital vertex of $p^{(w)}$ and $u_1$ will denote the terminal vertex of $p^{(w)}$.

\begin{claim} \label{claim:negative tile binds to $q$}
Suppose that $q^i_j$ has vertices $v_0, \ldots, v_N$, let $v = v_0$ or $v = v_N$ and suppose that $\deg(v) = 2$. Let $\tilde{P} \in \Pcl$ be the disk adjacent to $v$. Then $\tilde{P}$ is negative.
\end{claim}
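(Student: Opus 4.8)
The plan is to argue purely combinatorially about which two–letter words can occur along the boundary of a relator disk. First I record the local picture at $v$: since $\deg(v)=2$ and $2\le\deg'(v)\le\deg(v)$, we have $\deg'(v)=2$, so exactly two disks of $\Pcl$ are incident to $v$, namely the disk $P$ and one further disk $\tilde P\neq P$, and the two edges $e,e'$ of the diagram meeting at $v$ are each shared by $P$ and $\tilde P$. Hence both $\partial P$ and $\partial\tilde P$ pass through $v$ using $e$ and $e'$ consecutively, occupying the two complementary corners at $v$. Because adjacent disks in a van Kampen diagram on an oriented surface induce opposite orientations on a shared edge, if $P$ reads the length–two word $\omega$ along $\partial P$ as it passes through $v$ (counterclockwise), then $\tilde P$ reads exactly the formal inverse $\omega^{-1}$ (reverse the two letters and invert each) along $\partial\tilde P$ as it passes through $v$.

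Next I enumerate the possibilities for $\omega$. Since $v=v_0$ or $v=v_N$ is an endpoint of the subpath $q^i_j$, it is a ``joint'' vertex of the decomposition of $\partial P$ into $p^{(t)}$, the subpaths $q^i_1,\dots,q^i_6$, and $p^{(s)}$; that is, the two edges at $v$ on $\partial P$ carry the last letter of one syllable of $r_{m,n,N}$ and the first letter of the next. Reading off the syllable structure of $r_{m,n,N}=\ttt^{n}w_N^{2m}s_{N,m,n}$ with $w_N=\att^N\btt^N\ctt^N\att^{-N}\btt^{-N}\ctt^{-N}$, the word $\omega$ must be one of
$$
\ttt\att,\qquad \ctt^{-1}\att,\qquad \att\btt,\qquad \btt\ctt,\qquad \ctt\att^{-1},\qquad \att^{-1}\btt^{-1},\qquad \btt^{-1}\ctt^{-1},\qquad \ctt^{-1}\stt_1,
$$
where $\ttt\att$ occurs only at $u_0$ (the vertex $v_0$ of $q^1_1$), $\ctt^{-1}\stt_1$ only at $u_1$ (the vertex $v_N$ of $q^{2m}_6$), and the remaining six at the internal joints of $p^{(w)}$.

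The heart of the matter is then that for none of these eight words $\omega$ is $\omega^{-1}$ a cyclic subword of any relator $r_{m',n',N'}$. This is a finite check: every two–letter cyclic subword of $r_{m',n',N'}=\ttt^{n'}w_{N'}^{2m'}s_{N',m',n'}$ is either a single letter repeated (inside a syllable $\ttt^{n'}$, $\att^{\pm N'}$, $\btt^{\pm N'}$, $\ctt^{\pm N'}$, or $\stt_k^{\pm l'}$) or one of the finitely many syllable–joint words — precisely $\ttt\att$, $\att\btt$, $\btt\ctt$, $\ctt\att^{-1}$, $\att^{-1}\btt^{-1}$, $\btt^{-1}\ctt^{-1}$, $\ctt^{-1}\att$, $\ctt^{-1}\stt_1$, together with the joints internal to $s_{N',m',n'}$ (none of which involves $\att,\btt,\ctt$ or $\ttt$) and the wrap–around joint $\stt_9^{-1}\ttt$. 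Comparing the first letter of each $\omega^{-1}$ against this list rules it out; for instance $(\att\btt)^{-1}=\btt^{-1}\att^{-1}$, whereas the only subwords beginning with $\btt^{-1}$ are $\btt^{-1}\btt^{-1}$ and $\btt^{-1}\ctt^{-1}$, and $(\ttt\att)^{-1}=\att^{-1}\ttt^{-1}$ contains $\ttt^{-1}$, which never occurs since $\ttt$ appears in $r_{m',n',N'}$ only with positive exponent. I then conclude: the boundary word of $\tilde P$ contains $\omega^{-1}$ as a cyclic subword; were $\tilde P$ positive, its boundary word would be a cyclic conjugate of some $r_{m',n',N'}$, making $\omega^{-1}$ a cyclic subword of $r_{m',n',N'}$ — a contradiction. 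Hence $\tilde P$ is negative.

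The step I expect to require the most care is the orientation bookkeeping in the first paragraph — namely that $\tilde P$ reads $\omega^{-1}$ rather than the two letters of $\omega$ inverted but kept in the same order; this distinction is essential, since $\att^{-1}\btt^{-1}$ (the ``wrong'' candidate for what $\tilde P$ would read opposite $\att\btt$) genuinely is a subword of the relators, so the argument would collapse if that were what appeared. Everything after that is routine finite casework on two–letter words.
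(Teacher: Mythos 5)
Your proof is correct and takes essentially the same approach as the paper: you read off the two-letter word $\omega$ that $\partial P$ traces at the joint vertex $v$, use the orientation of the surface to conclude that $\tilde{P}$ traces the formal inverse $\omega^{-1}$ at $v$, and then check by finite inspection of the cyclic two-letter subwords of $r_{m',n',N'}$ that none of the possible $\omega^{-1}$ can occur on a positive disk. You are somewhat more explicit than the paper on two points — the orientation bookkeeping that produces $\omega^{-1}$ (reverse and invert) rather than a letterwise inverse, and the treatment of $u_1$ via $\omega = \ctt^{-1}\stt_1$, which the paper dismisses with "the same argument holds" — but the underlying argument is identical.
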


\begin{proof}
Let $v$ be such a vertex and assume that $v \not \in \{ u_0, u_1 \}$.

Let $e_1$ be the edge with endpoint $v$ and let $e_2$ be the edge with inital vertex $v$. Let $\xtt$ be the last letter in the label of $e_1$ and let $\ytt$ be the first letter in the label of $e_2$. Then $\xtt \ytt$ is one of $\{ \att \btt, \btt \ctt, \ctt \att^{-1}, \att^{-1} \btt^{-1}, \btt^{-1} \ctt^{-1}, \ctt^{-1} \att \}$.

The disk adjacent to $v$ has to be labelled by the inverse of this label, hence has to contain one of $\{ \btt^{-1} \att^{-1}, \ctt^{-1} \btt^{-1}, \att \ctt^{-1}, \btt \att, \ctt \btt, \att^{-1} \ctt \}$. 
The only disks which contain such subwords are negative disks.

Similarly, if $v = u_0$ and $\deg(v) = 2$, then $P$ contains the subword $\ttt \att$ and hence $\tilde{P}$ has to contain the subword $\att^{-1} \ttt^{-1}$ and thus is negative. The same argument holds for $v = u_1$.
\end{proof}

\begin{defn}[The function $\mu$] \label{defn:mu}
Define $\mu$ on the segments $q^i_j$ as follows:
For a vertex $v \in p^{(w)}$ we set $\mu(v) = 1$ if
\begin{itemize}
\item $v$ is the endpoint (and hence also the start) of some $p^i_j$, and
\item $\deg(v)=2$, and
\item the disk $\tilde{P}$ adjacent to $v$ is labelled by $r_{n',m',N'}^{-1}$ (see Claim \ref{claim:negative tile binds to $q$}) and $N'\geq N$.
\end{itemize}
Else, we set $\mu(v)=0$.
For a segment $q^i_j$ with vertices $v_0, \ldots, v_N$ we set
$$
\mu(q^i_j) = \frac{1}{2} \left( \mu(v_0) + \mu(v_N) \right).
$$
Finally, for $P$ as above set
$$
\mu(P) =
\sum_{i = 1, \ldots, 2m} \sum_{j = 1, \ldots, 6} \mu(q^i_j).
$$
\end{defn}

\begin{claim} \label{claim:estimate kappa and mu}
For every segment $q^i_j$ as above we have that 
\begin{eqnarray} \label{eqn:inequ kappa mu}
\beta(q^i_j) + \mu(q^i_j) \geq 1
\end{eqnarray}
where $\beta$ is as in Definition \ref{defn:beta} and $\mu$ is as in Definition \ref{defn:mu}.
\end{claim}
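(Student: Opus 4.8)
The plan is to reduce inequality \eqref{eqn:inequ kappa mu} to a statement about the two endpoints of $q^i_j$, and then to identify the disk glued to $q^i_j$ from the side opposite $P$. Since $\mu(q^i_j)\geq 0$, the inequality is automatic once $\beta(q^i_j)\geq 1$; by the definition of $\beta$ this can fail only if none of the interior vertices $v_1,\dots,v_{N-1}$ of $q^i_j$ is a branch vertex \emph{and} at least one endpoint, say $v_0$, has $\deg(v_0)=2$. So assume $\deg(v_k)=2$ for $k=1,\dots,N-1$; then $\beta(q^i_j)=\frac12(\beta(v_0)+\beta(v_N))$ and $\mu(q^i_j)=\frac12(\mu(v_0)+\mu(v_N))$, so it is enough to prove $\beta(v)+\mu(v)\geq 1$ for each endpoint $v\in\{v_0,v_N\}$. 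As $\beta(v)=1$ whenever $\deg(v)\geq 3$, the only remaining case is $\deg(v)=2$, where one must show $\mu(v)=1$.

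So fix an endpoint $v$ of $q^i_j$ with $\deg(v)=2$. First I would show that, because all interior vertices of $q^i_j$ have degree $2$, the whole of $q^i_j$ is glued along its full length to a single disk $\tilde P$ on the opposite side, and that this $\tilde P$ is precisely the disk adjacent to $v$ in Claim~\ref{claim:negative tile binds to $q$}. The point is that every edge of $q^i_j$ carries one of the letters $\att^{\pm1},\btt^{\pm1},\ctt^{\pm1}$, hence is not an edge of $\partial\Sigma$ (all of whose edges are labelled $\ttt$) and therefore borders exactly two disks, one of which is $P$; and at an interior vertex $v_k$ we have $2\leq\deg'(v_k)\leq\deg(v_k)=2$, so the disk $\ne P$ bordering the edge of $q^i_j$ just before $v_k$ coincides with the one bordering the edge just after. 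Propagating this identity along $q^i_j$ produces a common far-side disk $\tilde P$, and since $\deg'(v)=2$ as well, the unique disk other than $P$ meeting $v$ is exactly $\tilde P$. In particular $q^i_j$ is a connected sub-arc of $\partial\tilde P$, traversed there with the inverse word, i.e.\ as a run of $N$ equal letters.

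Finally I would pin down $\tilde P$. By Claim~\ref{claim:negative tile binds to $q$} it is negative, say labelled by $r_{m',n',N'}^{-1}$ with $N'=N(\tilde P)$, so the run of $N$ equal letters just described is a cyclic subword of $r_{m',n',N'}^{-1}=s_{N',m',n'}^{-1}w_{N'}^{-2m'}\ttt^{-n'}$. Inspecting this word, each of $\att,\btt,\ctt$ occurs only inside the factor $w_{N'}^{-2m'}$, and every maximal run of any such letter there has length exactly $N'$: consecutive $\att^{\pm N'},\btt^{\pm N'},\ctt^{\pm N'}$-blocks inside one copy of $w_{N'}^{-1}$, the transition between successive copies, and the junctions with $s^{-1}$ and with $\ttt^{-n'}$ all change the letter, so two such runs never merge. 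Hence the run coming from $q^i_j$ has length at most $N'$, i.e.\ $N=N(P)\leq N'=N(\tilde P)$; together with $\deg(v)=2$ and the fact that $\tilde P$ is the negative disk adjacent to $v$, this verifies all three clauses of Definition~\ref{defn:mu}, so $\mu(v)=1$. The exceptional vertices $u_0,u_1$ are handled identically, reading off the $\ttt$-corners exactly as in the proof of Claim~\ref{claim:negative tile binds to $q$}. I expect the combinatorial bookkeeping of the last two steps — that a run of $N$ identical generator letters forces a single far-side disk all along $q^i_j$ whose index $N'$ is at least $N$ — to be the only real obstacle; it hinges on the blocks $\att^{\pm N},\btt^{\pm N},\ctt^{\pm N}$ of $w_N$ being separated so that equal-letter runs never amalgamate, which is also what makes the presentation $C'(1/6)$.
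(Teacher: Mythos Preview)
Your argument is correct and is essentially the paper's own approach: both reduce to the situation where every interior vertex of $q^i_j$ has degree $2$, identify the unique far-side disk $\tilde P$ glued along $q^i_j$, and then verify $N(\tilde P)\geq N(P)$ so that each degree-$2$ endpoint satisfies $\mu=1$. The paper organises this as a five-case split on the endpoint degrees and reads off one extra letter from the neighbouring segment to locate $\tilde P$, whereas your endpoint-by-endpoint reduction together with the bound ``a run of $N$ equal letters in $\partial\tilde P$ forces $N\leq N'$'' is a slightly cleaner packaging of the same idea.
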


\begin{proof}
Let $q^i_j$ be the segment with vertices $v_0, \ldots, v_N$. We will distinguish between the following cases: 

\begin{itemize}
\item We have $\deg(v_i) \geq 3$ for some $i \in \{2, \ldots, N-1 \}$: In this case we see that $\beta(q^i_j) \geq 1$ and thus $(\ref{eqn:inequ kappa mu})$ follows.
\item 
We have $\deg(v_i)=2$ for all $i \in \{2, \ldots, N-1 \}$ and $\deg(v_0), \deg(v_N) \geq 3$.
In this case again $\beta(q^i_j) \geq 1$ and thus (\ref{eqn:inequ kappa mu}) holds.

\item 
We have $\deg(v_i)=2$ for all $i \in \{1, \ldots, N-1 \}$ and $\deg(v_N) \geq 3$.
Suppose that $q^i_j$ is labelled by $\xtt^N$ and that the edge with terminal vertex $v_1$ ends in $\ytt$. Then $\ytt \xtt^N$ is one of
$$
\{ \ttt \att^N, \ctt^{-1} \att^N, \att \btt^N, \btt \ctt^N, \ctt \att^{-N}, \att^{-1} \btt^{-N}, \btt^{-1} \ctt^{-N} \}
$$
and thus the disk adjacent to $v$ has to contain the words
$$
\{ \att^{-N} \ttt^{-1}, \att^{-N} \ctt, \btt^{-N} \att^{-1}, \ctt^{-N} \btt^{-1}, \btt^N \att, \ctt^N \btt \}
$$ 
as subwords. The only such disks are labelled by negative relations $r_{m_i,n_i,i}^{-1}$ where $i \geq N$.
Thus, in this case, $\beta(q^i_j) = \frac{1}{2}$ and $\mu(q^i_j) = \frac{1}{2}$ and hence $(\ref{eqn:inequ kappa mu})$ follows.

\item 
We have $\deg(v_i)=2$ for all $i \in \{2, \ldots, N \}$ and $\deg(v_1) \geq 3$: Analogous to the previous case.

\item 
We have $\deg(v_i)=2$ for all $i \in \{1, \ldots, N \}$.
Suppose that $q^i_j$ is labelled by $\xtt^N$ and that the edge with terminal vertex $v_1$ ends in $\ytt_1$ and that the edge with initial vertex $v_N$ starts in $\ytt_2$. Then $\ytt_1 \xtt^N \ytt_2$ is one of
$$
\{ \ttt \att^N \btt, \ctt^{-1} \att^N \btt, \att \btt^N \ctt, \btt \ctt^N \att^{-1}, \ctt \att^{-N} \btt^{-1},  \att^{-1} \btt^{-N} \ctt^{-1}, \btt^{-1} \ctt^{-N} \att \}
$$
and thus the disk adjacent to $v$ has to contain the words
$$
\{ \btt^{-1} \att^{-N} \ttt^{-1}, \btt^{-1} \att^{-N} \ctt, \ctt^{-1} \btt^{-N} \att^{-1}, \att^{-1} \ctt^{-N} \btt^{-1}, \ctt \btt^N \att, \att^{-1} \ctt^N \btt \}
$$ 
as subwords. The only such disks are labelled by negative relations $r_{m,n,N}^{-1}$.
Thus in this case $\mu(q^i_j) = 1$ and $(\ref{eqn:inequ kappa mu})$ follows. \qedhere
\end{itemize}
\end{proof}

\begin{claim} \label{claim:positive polygon, mu and nu}
For a positive disk $P \in \Pcl^+$ we have that
$$
-\kappa(P) \geq - \frac{1}{6} \mu(P)+ 2 m(P)
$$
where $\kappa(P)$ is the curvature (see Section \ref{subsec:comb gauss bonnet}) and $\mu(P)$ is as in Definition \ref{defn:mu}.
\end{claim}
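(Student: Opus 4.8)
The plan is to compute $\kappa(P)$ exactly from its definition, then estimate the two sums appearing in it using the branch-vertex bookkeeping already set up. Recall
$$
\kappa(P) = \left( \sum_{v \in V_P} \frac{1}{\deg'(v)} \right) - \left( \sum_{e \in E_P} \frac{1}{\epsilon(e)} \right) + 1,
$$
so $-\kappa(P) = \left( \sum_{e \in E_P} \frac{1}{\epsilon(e)} \right) - \left( \sum_{v \in V_P} \frac{1}{\deg'(v)} \right) - 1$. First I would pass to the reduced cell structure on $\partial P$ in which maximal arcs of degree-$2$ vertices are concatenated into single edges, except that I keep the vertices that Claim \ref{claim:negative tile binds to $q$} and Definition \ref{defn:mu} care about; more cleanly, I would argue directly on the segments $q^i_j$ and on $p^{(t)}$, $p^{(s)}$. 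Since $P$ is a disk polygon in a van Kampen diagram on a surface, its boundary is a cycle, and every edge of $\partial P$ is adjacent to $P$ together with at most one other disk; the key point is that an edge $e$ contributes $\tfrac12$ to $\sum 1/\epsilon(e)$ when $\epsilon(e)=2$ and $1$ when $\epsilon(e)=1$, while an interior vertex $v$ of a maximal degree-$2$ arc contributes $\tfrac12$ to $\sum 1/\deg'(v)$.

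The core of the argument is the following local accounting, carried out edge-by-edge along $p^{(w)} = \prod_{i=1}^{2m}\prod_{j=1}^{6} q^i_j$. Along a single segment $q^i_j$ with vertices $v_0,\dots,v_N$, the $N$ edges and $N-1$ interior vertices between them contribute, to $-\kappa(P)$ (before the global $-1$ and the contributions of the endpoints $v_0,v_N$ which are shared with neighbouring segments and so counted with weight $\tfrac12$), a net amount that I will show is at least $1 - \tfrac16\,\mu(q^i_j) - \tfrac16\,\beta(\text{endpoint corrections})$, using Claim \ref{claim:estimate kappa and mu}, $\beta(q^i_j) + \mu(q^i_j) \ge 1$, together with the elementary bound that each branch vertex on $\partial P$ costs at most $\tfrac16$ in $-\kappa$ (this is exactly the content of Proposition \ref{prop:curvature via branch vertices} localised to one vertex: a degree-$\ge 3$ vertex has $\deg'(v)\ge 3$ so contributes at most $\tfrac13$ to $\sum 1/\deg'(v)$, and the edge-count bookkeeping turns the deficit into the $\tfrac16$ figure). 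Summing over the $12m$ segments $q^i_j$ gives a main term $12m \cdot \tfrac{?}{}$; I would calibrate the constants so that the per-segment contribution to $-\kappa(P)$ is at least $\tfrac16$, yielding $12m\cdot\tfrac16 = 2m = 2m(P)$ as the main term, while the $\mu$-corrections assemble into exactly $-\tfrac16\mu(P)$ by the additivity of $\mu$ over segments (Definition \ref{defn:mu}) and of $\beta$ over subpaths (Proposition \ref{prop:useful property beta}).

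The remaining contributions come from $p^{(t)}$ (labelled $\ttt^n$) and $p^{(s)}$ (labelled $s_{N,m,n}$); these have nonnegative net contribution to $-\kappa(P)$ — indeed, on $p^{(s)}$ one runs the same degree-$2$-vs-branch analysis as in Claim \ref{claim:estimate kappa and mu}, and the boundary word $s_{N,m,n}$ has the same syllable structure ($xyz x^{-1}y^{-1}z^{-1}$ blocks) so the identical small-cancellation reasoning shows every maximal degree-$2$ arc there either ends at a branch vertex or its neighbouring disk is forced, and in all cases the arc contributes $\ge 0$. Likewise $p^{(t)}$ contributes $\ge 0$ since each of its edges has $\epsilon(e)\le 2$ and the degree-$2$ interior vertices along it are harmless. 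Finally, one absorbs the global $+1$ (i.e. the $-1$ in $-\kappa(P)$) and the $\tfrac16\cdot 6$-type endpoint roundings into the slack of the estimate: the per-segment bound is actually $\ge \tfrac16$ with a little room (from the $\tfrac13$ versus $\tfrac12$ gap at genuine branch vertices), and $12m \ge 12$ gives enough to cover the constant. Assembling everything yields $-\kappa(P) \ge 2m(P) - \tfrac16\mu(P)$, as claimed.

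**Expected main obstacle.** The delicate point is the constant bookkeeping at the \emph{endpoints} of the segments $q^i_j$ (the vertices $u_0,u_1$ and the junctions between the $w_N$-blocks, and between $p^{(w)}$ and $p^{(t)},p^{(s)}$): each such vertex is shared between two segments, and the definitions weight it by $\tfrac12$ in both $\beta$ and $\mu$, so one must check that the per-segment inequality $\beta(q^i_j)+\mu(q^i_j)\ge 1$ of Claim \ref{claim:estimate kappa and mu} glues up consistently with the $\tfrac16$-per-branch-vertex cost of Proposition \ref{prop:curvature via branch vertices} without double-counting or losing the constant term $-1$ in $-\kappa(P)$. Getting the arithmetic to land exactly on $2m(P)$ rather than $2m(P) - O(1)$ requires using that $m(P)\ge 1$ and harvesting the $\tfrac13$-vs-$\tfrac12$ slack; I would do this carefully by first proving a clean inequality $-\kappa(P) \ge \tfrac{1}{6}\big(\beta(P) - \mu(P)\big) + (\text{something}) - 1$ combining Propositions \ref{prop:combinatorial gauss bonnet} and \ref{prop:curvature via branch vertices} with the segment count $\beta(P) = \sum_{i,j}\beta(q^i_j) + \beta(p^{(t)}) + \beta(p^{(s)}) \ge \sum_{i,j}(1-\mu(q^i_j)) = 12m - \mu(P)$, and then checking $\tfrac16(12m-2\mu(P)) \ge 2m - \tfrac16\mu(P)$ reduces to $-\tfrac13\mu(P)\ge -\tfrac16\mu(P)$, which is false — so in fact the correct route is to keep $\mu$ attached only to the degree-$2$ endpoints and \emph{not} bound $\beta(q^i_j)$ below by $1-\mu(q^i_j)$ globally, but rather feed the pair $(\beta,\mu)$ segment-by-segment into the curvature sum. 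That reorganisation — doing the Gauss–Bonnet sum over segments rather than over all of $\partial P$ at once — is the real content, and the rest is routine.
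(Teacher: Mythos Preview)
Your proposal overcomplicates the argument and, in the ``Expected main obstacle'' paragraph, you actually arrive at the paper's approach but then reject it because of an arithmetic slip. The paper's proof is three lines: by Proposition~\ref{prop:curvature via branch vertices}, $-\kappa(P) \geq \tfrac{1}{6}(\beta(P)-6)$; by additivity (Proposition~\ref{prop:useful property beta}), $\beta(P) \geq \beta(p^{(w)}) + \beta(p^{(s)})$; and small cancellation along the $s_{N,m,n}$-part forces $\beta(p^{(s)}) \geq 7$, which absorbs the $-6$ and leaves $-\kappa(P) \geq \tfrac{1}{6}\beta(p^{(w)})$. Now apply Claim~\ref{claim:estimate kappa and mu} segment by segment and sum: $\beta(p^{(w)}) = \sum_{i,j}\beta(q^i_j) \geq \sum_{i,j}(1-\mu(q^i_j)) = 12m(P) - \mu(P)$, and dividing by $6$ gives exactly the claim.

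Your error is in the line ``checking $\tfrac16(12m-2\mu(P)) \ge 2m - \tfrac16\mu(P)$'': there is no $2\mu(P)$. You invented an inequality $-\kappa(P) \ge \tfrac{1}{6}(\beta(P)-\mu(P)) + \cdots$ that is not what Proposition~\ref{prop:curvature via branch vertices} says; the proposition has no $\mu$ in it at all. Plugging $\beta(P) \geq 12m-\mu(P)$ into the correct bound $\tfrac16(\beta(P)-6)$ gives $2m - \tfrac16\mu(P) - 1$, and the missing $+1$ is supplied precisely by $\beta(p^{(s)}) \geq 7$, which you dismissed as contributing merely ``$\geq 0$''. That is the gap: $p^{(s)}$ is not a harmless remainder but the source of the constant that closes the estimate, and this is exactly why the relators were designed with the long $s_{N,m,n}$ tail. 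No segment-by-segment Gauss--Bonnet reorganisation is needed; the global bound $\beta(q^i_j)\geq 1-\mu(q^i_j)$ that you abandoned is exactly what the paper uses.
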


\begin{proof}
Fix a positive disk $P \in \Pcl^+$ labelled by $r_{m,n,N}$ and again assume that its boundary decomposes into $\partial P = p^{(t)} \cdot p^{(w)} \cdot p^{(s)}$ where $p^{(t)}$ is labelled by $\ttt^{n}$, $p^{(w)}$ is labelled by $w_N^{2m}$, and $p^{(s)}$ is labelled by  $s_{N,m,n}$. Moreover, the segment $p^{(w)}$ is the concatenation of the segments $q^i_j$ described above.
We will estimate $-\kappa(P)$  using Proposition \ref{prop:curvature via branch vertices}. 

Using Proposition \ref{prop:useful property beta} we may estimate $\beta(P) \geq \beta(p^{(w)}) + \beta(p^{(s)})$ .
It follows from small cancellation theory that $\beta(p^{(s)}) \geq 7$.
Hence, by Proposition \ref{prop:curvature via branch vertices} we have that
$$
-\kappa(P) \geq \frac{\beta(p^{(w)})}{6}.
$$ 
By Claim \ref{claim:estimate kappa and mu} we have that $\beta(q^i_j) \geq 1 - \mu(q^i_j)$ for every segment $q^i_j$ as above. Note that $p^{(w)}$ consists of $12 m(P)$ such segments. By additivity of $\beta$ (Proposition \ref{prop:useful property beta}) and $\mu$ we conclude that
$$
-\kappa(P) \geq -\frac{1}{6} \mu(P) + 2 m(P). \qedhere
$$
\end{proof}

\begin{claim} \label{claim:estimation of nu}
We have that 
$$
n(\Sigma) \leq \sum_{P \in \Pcl^+} \left( n(P) - \mu(P) \frac{n(P)}{12 m(P)}  \right)
$$
where $n(\Sigma)$ denotes the total degree of $\Sigma$.
\end{claim}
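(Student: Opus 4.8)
The plan is to recast the asserted inequality as a statement about negative disks and prove it by distributing charges. Recall that $\sum_{P \in \Pcl} n(P) = n(\Sigma)$ and that $n(P) = -n_{N(P)}$ whenever $P \in \Pcl^-$; writing $\Pcl = \Pcl^+ \sqcup \Pcl^-$, the asserted inequality is equivalent to
$$
\sum_{P \in \Pcl^+} \mu(P) \frac{n(P)}{12 m(P)} \;\leq\; \sum_{\tilde P \in \Pcl^-} n_{N(\tilde P)} .
$$
First I unpack the left-hand side. For a positive disk $P$, unwinding Definition \ref{defn:mu} gives $\mu(P) = \sum_v c_v \mu(v)$, the sum running over the vertices $v$ of $p^{(w)}$ that are endpoints of the segments $q^i_j$, with $c_v = 1$ if $v$ is interior to $p^{(w)}$ and $c_v = \tfrac12$ if $v \in \{u_0, u_1\}$. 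For every such $v$ with $\mu(v) = 1$ I charge the number $c_v \frac{n(P)}{12 m(P)}$ to the disk $\tilde P_v$ adjacent to $v$; by Definition \ref{defn:mu} together with Claim \ref{claim:negative tile binds to $q$}, this $\tilde P_v$ exists, is a negative disk, and satisfies $N(\tilde P_v) \geq N(P)$. Then the left-hand side above is precisely the total charge distributed over $\Pcl^-$.

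It remains to bound, for each negative disk $\tilde P$ labelled by $r_{m',n',N'}^{-1}$, the charge it receives by $n_{N'}$. Fix a vertex $v$ charging to $\tilde P$ and let $P_v$ be the associated positive disk. Since $\deg(v) = 2$, the vertex $v$ lies on $\partial \tilde P$ with the same two incident edges, so the two-letter word read off $\partial \tilde P$ at $v$ is the inverse of the one read off $\partial P_v$ at $v$; the latter is one of the six patterns of Claim \ref{claim:negative tile binds to $q$} (if $v$ is interior to $p^{(w)}(P_v)$), or $\ttt\att$ (if $v = u_0$), or $\ctt^{-1}\stt_1$ (if $v = u_1$). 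Inspecting the label $s_{N',m',n'}^{-1} w_{N'}^{-2m'} \ttt^{-n'}$ of $\partial \tilde P$, one checks that such an inverse pattern can occur only at the $12 m' - 1$ junctions between distinct letter-blocks interior to $w_{N'}^{-2m'}$, or at the $2$ junctions where $w_{N'}^{-2m'}$ abuts $\ttt^{-n'}$ and $s_{N',m',n'}^{-1}$; furthermore these two end junctions carry the patterns $\att^{-1}\ttt^{-1}$ and $\stt_1^{-1}\ctt$, hence can be charged only by vertices of $u_0$- resp.\ $u_1$-type, for which $c_v = \tfrac12$. Since distinct charging vertices are distinct vertices of $\partial \tilde P$ (again because $\deg = 2$), the coefficients $c_v$ of the vertices charging $\tilde P$ add up to at most $(12 m' - 1) + 2 \cdot \tfrac12 = 12 m'$. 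As $N(P_v) \leq N'$ for every charging vertex, the monotonicity hypothesis $\frac{m_i}{n_i} \geq \frac{m_{i+1}}{n_{i+1}}$ gives $\frac{n(P_v)}{m(P_v)} = \frac{n_{N(P_v)}}{m_{N(P_v)}} \leq \frac{n_{N'}}{m_{N'}}$, and therefore the total charge to $\tilde P$ is at most
$$
\sum_v c_v \frac{n(P_v)}{12 m(P_v)} \;\leq\; \frac{n_{N'}}{12 m_{N'}} \sum_v c_v \;\leq\; \frac{n_{N'}}{12 m_{N'}} \cdot 12 m_{N'} \;=\; n_{N'} .
$$
Summing over all $\tilde P \in \Pcl^-$ and unwinding the equivalence above yields the claim.

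The step I expect to be the main obstacle is the combinatorial bookkeeping above: one must verify from the explicit relator words of Definition \ref{defn:groups constructed} exactly which vertices of $\partial \tilde P$ can receive charge, and---crucially---that the apparent $12 m' + 1$ candidate vertices contribute an effective total weight of only $12 m'$, because the two extreme junctions of the $w_{N'}^{-2m'}$-block receive only half-weight charges. It is precisely this constant that lets the monotonicity hypothesis close the estimate.
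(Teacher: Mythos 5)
Your proof is correct and follows the same double-counting strategy as the paper: from each $\mu$-marked vertex of a positive disk $P$, push a charge of $c_v\,\tfrac{n(P)}{12m(P)}$ onto the adjacent negative disk, bound the total each $\tilde P \in \Pcl^-$ receives by $n_{N(\tilde P)}$, and close the estimate with the monotonicity of $\tfrac{m_i}{n_i}$. If anything you are slightly more careful than the paper's own terse write-up at the endpoints $u_0, u_1$: the paper only mentions the six internal junction patterns, whereas you note explicitly that the two extreme junctions of the $w_{N'}^{-2m'}$-block (carrying $\att^{-1}\ttt^{-1}$ and $\stt_1^{-1}\ctt$) can only be charged by $u_0$- resp.\ $u_1$-type vertices with weight $\tfrac12$, so that the weighted junction count is $(12m'-1)+2\cdot\tfrac12 = 12m'$ — exactly the bookkeeping needed to make the bound rigorous.
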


\begin{proof}
Fix a positive polygon $P \in \Pcl^+$. We recall that $\mu(P)$ counts the number of subwords 
\begin{eqnarray} \label{equ:segment}
\btt^{-1} \att^{-1}, \ctt^{-1} \btt^{-1}, \att \ctt^{-1}, \btt \att, \ctt \btt, \att^{-1} \ctt
\end{eqnarray}
of negative disks $\tilde{P}$ adjacent to $P$. 
Fix such a disk $\tilde{P}$. There are $12 m(\tilde{P})$ such above segments and the total degree of $\tilde{P}$ is $n(\tilde{P})$. Thus one segment of (\ref{equ:segment}) contributes $\frac{n(\tilde{P})}{12 m(\tilde{P})}$ to $\sum_{P \in \Pcl^-} n(P)$.
Recall that for a polygon labelled by $r_{m,n,N}$ we set $N(P)=N$.
Since $N(\tilde{P}) \geq N(P)$ we see that 
$$
\frac{n(\tilde{P})}{m(\tilde{P})} \leq -\frac{n(P)}{m(P)}
$$
as we chose the sequence $\left( \frac{n_i}{m_i} \right)_{i \in \N}$ to be decreasing and as $n(\tilde{P})$ is negative.
We conclude that
$$
-\sum_{P \in \Pcl^+} \mu(P) \frac{1}{12} \frac{n(P)}{m(P)} \geq \sum_{P \in \Pcl^-} n(P).
$$
and hence
$$
n(\Sigma) = \sum_{P \in \Pcl^+} n(P)  + \sum_{P \in \Pcl^-} n(P) \leq  \sum_{P \in \Pcl^+} n(P) - \mu(P) \frac{n(P)}{12 m(P)} \qedhere
$$
\end{proof}

We are now able to finish the proof of Proposition \ref{proposition:approx scl groups}.
Let $\Pcl$ be a van Kampen diagram on an admissible surface $\Sigma$ to $\gamma$. We may estimate
$$
\frac{-\chi(\Sigma)}{2 n(\Sigma)}=\frac{\sum_{P \in \Pcl} -\kappa(P)}{2 \sum_{P \in \Pcl} n(P)}.
$$
Observe that for every negative tile $P \in \Pcl^-$ we have that $\kappa(P) \leq 0$ by small cancellation theory, Claim \ref{claim:positive polygon, mu and nu} asserts that
$$
\sum_{P \in \Pcl^+} -\kappa(P) \geq  \sum_{P \in \Pcl^+} - \frac{1}{6} \mu(P)+ 2 m(P).
$$
Moreover, we know by Claim \ref{claim:estimation of nu} that
$$
n(\Sigma) \leq \sum_{P \in \Pcl^+} \left( n(P) - \frac{\mu(P)}{12} \frac{n(P)}{m(P)} \right)
$$
and hence
\begin{eqnarray*}
\frac{-\chi(\Sigma)}{2 n(\Sigma)} &\geq & \frac{\sum_{P \in \Pcl^+} \left( m(P) - \frac{\mu(P)}{12} \right) }{\sum_{P \in \Pcl^+} \left( n(P) - \frac{\mu(P)}{12} \frac{n(P)}{m(P)} \right) } \\
&=& 
\frac{\sum_{P \in \Pcl^+} \left( m(P) - \frac{\mu(P)}{12} \right)}{\sum_{P \in \Pcl^+} \frac{n(P)}{\mu(P)} \left( m(P) - \frac{\mu(P)}{12} \right)}. 
\end{eqnarray*}

It is easy to see that for every $P \in \Pcl^+$ we have that $m(P) - \frac{\mu(P)}{12} \geq 0$.
Let $\Pcl' \subset \Pcl^+$ be the set of positive disks such that $m(P) - \frac{\mu(P)}{12} > 0$.
As
$$
0 <
n(\Sigma) \leq \sum_{P \in \Pcl^+} \frac{n(P)}{\mu(P)}  \left( m(P) - \frac{\mu(P)}{12} \right)
$$
we see that the set $\Pcl'$ is non-empty. Moreover, observe that
$$
\frac{\mu(P)}{n(P)} \geq \lim_{i \to \infty} \frac{m_i}{n_i} =: \alpha
$$
and hence
$$
\frac{- \chi(\Sigma)}{2 n(\Sigma)} \geq 
\frac{\sum_{P \in \Pcl'} \left( m(P) - \frac{\mu(P)}{12} \right)}{\sum_{P \in \Pcl'} \frac{1}{\alpha} \left( m(P) - \frac{\mu(P)}{12} \right)} \geq \alpha = \lim_{i \to \infty} \frac{m_i}{n_i}.
$$

We conclude that every $\epsilon > 0$ we have that
$$
\scl_\Gcl(\ttt) \geq \frac{- \chi(\Sigma)}{2 n(\Sigma)}-\epsilon \geq \lim_{i \to \infty} \frac{m_i}{n_i} - \epsilon,
$$
and thus
$$
\scl_\Gcl(\ttt) \geq \lim_{i \to \infty} \frac{m_i}{n_i}.
$$
Together with the opposite inequality derived above we may conclude an equality and thus prove Proposition \ref{proposition:approx scl groups}. 
\end{proof}

\section{Proof of Theorem \ref{theorem:classification of scl on rp groups} and Corollary \ref{corollary: small cancellation}}
\label{sec:proof of theorems a,b,d}
We can now prove Theorem \ref{theorem:classification of scl on rp groups} and Corollary \ref{corollary: small cancellation}.

\begin{proof}[Proof of Theorem \ref{theorem:classification of scl on rp groups}]
First we show that $\SCL^{\rp} = \RC^{\geq 0}$. We will show both inclusions. 

For $\SCL^{\rp} \subset \RC^{\geq 0}$ fix a recursively presented group $G$ with recursive presentation $G \cong \langle S \mid \Rcl \rangle$ and an element $g \in G$ represented by an element $w \in F(S)$. 
We will show that $\{ q \in \Q \mid \scl_G(g) < q \}$ is the halting set of some Turing machine.

\begin{claim}
For every $N \in \N$ there is a Turing machine $T_N$ with input $\Q^{\geq 0}$ (appropriately encoded) and such that for any $q \in \Q^{\geq 0}$ the machine $T_N$  halts on $q$ if and only if $\cl_G(g^N) \leq q \cdot N$.
\end{claim}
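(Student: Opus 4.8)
The plan is to reduce the inequality $\cl_G(g^N) \le qN$ to a semidecidable search over the free group $F(S)$, exploiting that the word problem of $G$ is recursively enumerable whenever the presentation is recursive. First I would reduce to an integer bound: since $\cl_G(g^N)$ is a non‑negative integer (or $+\infty$, in case $g^N \notin G'$), the condition $\cl_G(g^N) \le qN$ is equivalent to $\cl_G(g^N) \le k$, where $k := \lfloor qN \rfloor$, a number that $T_N$ can compute from the input $q \in \Q^{\geq 0}$ once $q$ is presented as a pair of naturals $a/b$ (so $\lfloor qN\rfloor = \lfloor aN/b\rfloor$). Hence it suffices to build, for each fixed $k$, a machine that halts iff $\cl_G(g^N) \le k$, and then let $T_N$ first compute $k$ and run it.

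Next I would unfold the definition of commutator length at the level of $F(S)$. Writing $w \in F(S)$ for the given representative of $g$, we have $\cl_G(g^N) \le k$ if and only if there exist words $u_1,v_1,\dots,u_k,v_k \in F(S)$ such that
$$
w^N \cdot [u_k,v_k]^{-1} \cdots [u_1,v_1]^{-1}
$$
represents the trivial element of $G$ (for $k=0$ this is just the condition that $w^N$ is trivial in $G$). The set $W_G \subset F(S)$ of words representing the identity of $G$ is recursively enumerable: since $\Rcl \subset F(S)$ is recursively enumerable, a Turing machine can enumerate all finite products $\prod_j a_j r_j^{\pm 1} a_j^{-1}$ with $a_j \in F(S)$ and $r_j$ ranging over the partial enumeration of $\Rcl$, freely reduce each such product, and output the resulting reduced words; by the standard description of the normal closure this enumeration is exactly $W_G$. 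This is the recursively presented analogue of the observation made before Theorem~\ref{thm:higman embedding}.

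Finally I would assemble $T_N$ by dovetailing. On input $q$, the machine computes $k = \lfloor qN \rfloor$ and then runs a single diagonal search over all tuples $(u_1,\dots,v_k) \in F(S)^{2k}$ together with the enumeration of $W_G$ above: for each tuple it freely reduces $w^N [u_k,v_k]^{-1}\cdots[u_1,v_1]^{-1}$ and checks whether this reduced word has yet appeared in the partial list of $W_G$, halting the first time a match occurs and running forever otherwise. By the previous paragraph this search terminates if and only if such a tuple exists, i.e.\ if and only if $\cl_G(g^N) \le k$, i.e.\ if and only if $\cl_G(g^N) \le qN$; when $g^N \notin G'$ no tuple ever works and $T_N$ correctly diverges. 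The only genuine content — and the single place recursivity of the presentation is used — is the recursive enumerability of $W_G$; everything else is bookkeeping of the dovetail. I would just take care that the enumeration of $W_G$ is arranged so that every reduced trivial word is eventually listed (with no bound on when), which is all the halting argument requires.
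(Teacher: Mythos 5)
Your proposal is correct and follows essentially the same approach as the paper: reduce the rational bound to the integer $\lfloor qN\rfloor$, observe that the set of words representing the identity is recursively enumerable since $\Rcl$ is, and dovetail the search over commutator tuples against that enumeration. You just spell out the dovetailing and the $g^N\notin G'$ case a bit more explicitly than the paper does.
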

\begin{proof}
Indeed $\cl_G(g^N) \leq q \cdot N$ if and only if 
$\cl_G(g^N) \leq \lfloor q \cdot N \rfloor := q'$.
This happens if and only if there are elements $x_1, \ldots, x_{q'}, y_1, \ldots, y_{q'} \in F(S)$ and an element $r \in R=\langle \langle \Rcl \rangle \rangle$ such that
$$
w^N = [x_1, y_1] \cdots [x_{q'},y_{q'}] \cdot r.
$$
We can enumerate all such solutions and check if the condition is satisfied for one of them.
\end{proof}

Now let $T$ be the Turing machine with input $\Q^{\geq 0}$ which recursively starts running the machines $T_N(q)$ simultaneously for all $N \in \N$ and halts if and only if $T_N(q)$ halts for some $N \in \N$. 
If $\scl_G(g) < q$ then there exists some $N$ such that $\cl_G(g^N) \leq N q$. Thus $T$ halts.

If $\scl_G(g) > q$ then also $\cl_G(g^N) > N q$ for every $N \in \N$ and $T$ does not halt. Thus $T$ has halting set $\{ q \in \Q^{\geq 0} \mid  q > \scl_G(g) \}$ and thus $\scl_G(g)$ right-computable.
Hence $\SCL^{\rp} \subset \RC^{\geq 0}$.

To see $\SCL^{\rp} \supset \RC^{\geq 0}$ let $\alpha \in \RC^{\geq}$ be a right-computable number and let $T$ be the Turing machine of Lemma \ref{lemma:characterisation of right-computable} with input $\N$, output $\N \times \N$, such that $T(i) = (m_i, n_i)$ which satisfies that
$n_i < n_{i+1}$ and $\frac{m_i}{n_i} \leq \frac{m_{i+1}}{n_{i+1}}$ for all $i \in \N$ and such that
$$
\alpha = \lim_{i \to \infty} \frac{m_i}{n_i}.
$$
 Then we may construct the recursive presentation
for the group $\Gcl = \Gcl((m_i)_{i \in \N }, (n_i)_{i \in \N})$ as in Definition \ref{defn:groups constructed}.
Proposition \ref{proposition:approx scl groups} asserts that the stable commutator length of $\ttt \in \Gcl$ is exactly $\alpha$. Thus $\SCL^{\rp} \supset \RC^{\geq 0}$.

The rest of the theorem follows from Proposition \ref{prop:properties of rc}.
\end{proof}

\begin{proof}[Proof of Corollary \ref{corollary: small cancellation}]
Any non-negative real number $\alpha \in \R^{\geq 0 }$ arises as the limit $\lim_{i \to \infty} \frac{m_i}{n_i}$, where $n_i \leq n_{i+1}$ and such that $\frac{m_i}{n_i} \geq \frac{m_{i+1}}{n_{i+1}}$.
We conclude using Proposition \ref{proposition:approx scl groups}. 
\end{proof}

{\small
\bibliographystyle{alpha}
\bibliography{rp_scl}}

\end{document}